\documentclass{article} 
\author{Andy Hammerlindl}
\title{Dynamics of quasi-isometric foliations}

\usepackage{amsmath}
\usepackage{amssymb}
\usepackage{amsfonts}
\usepackage{amsthm}
\usepackage{graphicx}

\newcommand{\Heis}{\mathcal{H}}
\newcommand{\h}{\mathfrak{h}}
\newcommand{\Lspace}{\mathfrak{L}}
\newcommand{\bbR}{\mathbb{R}}
\newcommand{\R}{\mathbb{R}}
\newcommand{\Rd}{\R^d}
\newcommand{\bbZ}{\mathbb{Z}}
\newcommand{\Z}{\mathbb{Z}}
\newcommand{\bbT}{\mathbb{T}}
\newcommand{\T}{\mathbb{T}}
\newcommand{\bbS}{\mathbb{S}}
\newcommand{\F}{\mathcal{F}}
\newcommand{\Es}{E^s}
\newcommand{\Ec}{E^c}
\newcommand{\Eu}{E^u}
\newcommand{\Ecu}{E^{cu}}
\newcommand{\Ecs}{E^{cs}}
\newcommand{\Ws}{W^s}
\newcommand{\Wc}{W^c}
\newcommand{\Wu}{W^u}
\newcommand{\Wcu}{W^{cu}}
\newcommand{\Wcs}{W^{cs}}
\newcommand{\inv}{^{-1}}
\newcommand{\myrel}{\asymp}

\newcommand{\dist}{\operatorname{dist}}
\newcommand{\length}{\operatorname{length}}
\newcommand{\diam}{\operatorname{diam}}
\newcommand{\id}{\operatorname{id}}

\newcommand{\ttphi}{\tilde{\tilde\phi}}

\newtheorem{thm}{Theorem}[section]
\newtheorem{cor}[thm]{Corollary}
\newtheorem{lemma}[thm]{Lemma}
\newtheorem{prop}[thm]{Proposition}
\newtheorem{question}[thm] {\bf Question}

\theoremstyle{remark}
\newtheorem*{remark} {\bf Remark}
\newtheorem*{defn} {\bf Definition}

\providecommand{\acknowledgement}{{\noindent\bf Acknowledgements}\quad}

\begin{document}

\maketitle

\begin{abstract}
    If the stable, center, and unstable foliations of a partially hyperbolic
    system are quasi-isometric, the system has Global Product Structure.  This
    result also applies to Anosov systems and to other invariant splittings.

    If a partially hyperbolic system on a manifold with an abelian fundamental
    group has quasi-isometric stable and unstable foliations, the center
    foliation is without holonomy.  If, further, the system has Global Product
    Structure, then all center leaves are homeomorphic.
\end{abstract}
\section{Introduction} 

Invariant foliations are invaluable in understanding the properties of smooth
dynamical systems.
In this paper, we examine foliations invariant under a
dynamical system which have the additional property of quasi-isometry.
A foliation $W$ is \emph{quasi-isometric} if there is a constant $Q>1$ such
that for any two points $x,y \in M$ which lie on the same leaf of $W$,
$d_W(x,y)  \le  Q d_M(x,y) + Q$
where $d_W$ is distance measured along the leaf and $d_M$ is distance measured
on the manifold.
Such foliations arise naturally in the study of certain partially hyperbolic
systems.
A diffeomorphism $f:M \to M$ on a compact Riemannian manifold is \emph{partially
hyperbolic} if there are a $Tf$-invariant splitting,
$TM = \Es \oplus \Ec \oplus \Eu$,
an integer $n  \ge  1$, and constants $\lambda < \hat \gamma < 1 < \gamma <
\mu$ such that
\[
    \|Tf^n v^s\| < \lambda < \hat \gamma < \|Tf^n v^c\|
    < \gamma < \mu < \|Tf^n v^u\|
\]
for all $x \in M$, and unit vectors $v^s \in \Es(x)$, $v^c \in E^c(x)$, and
$v^u \in E^u(x)$.

For every partially hyperbolic system, there are unique foliations $\Wu$ and
$\Ws$ tangent to the unstable $\Eu$ and stable $\Es$ subbundles.  In many but not
all cases, there is also a foliation tangent to the center subbundle $\Ec$.
If the three subbundles $\Ec$, $\Ecs=\Ec \oplus \Es$, and $\Ecu = \Ec \oplus \Eu$ are
uniquely integrable, the system is said to be \emph{dynamically coherent}.
M.~Brin proved that if the foliations $\Wu$ and $\Ws$ are quasi-isometric when
lifted to the universal cover of the manifold, the system is dynamically
coherent \cite{Brin}.  Later, M.~Brin, D.~Burago, and S.~Ivanov proved that every
partially hyperbolic system on the 3-torus is dynamically coherent by
establishing this quasi-isometry property \cite{BBI2}.

The property of quasi-isometry tells us much more about partially hyperbolic
systems than just dynamical coherence. In my thesis, I used the property to
give a classification result for partially hyperbolic systems on the 3-torus
\cite{ham-thesis}.

This paper explores the consequences of quasi-isometry for partially
hyperbolic systems on general manifolds and in any dimension $d  \ge  3$.  The
first result is on the structure of the invariant foliations.

\begin{thm} \label{introgps}
    Suppose $f:M \to M$ is partially hyperbolic and the foliations
    $\Wu$, $\Ws$, and $\Wc$ are quasi-isometric when lifted to the universal
    cover, $\tilde M$.  Then, the foliations $\Wcs$ and $\Wcu$ tangent to $\Ec
    \oplus \Es$ and $\Ec \oplus \Eu$ are also quasi-isometric on $\tilde M$, and $f$
    has \emph{Global Product Structure}, that is, for every $x,y \in \tilde M${:}
    \begin{enumerate}
        \item $\Wu(x)$ and $\Wcs(y)$ intersect exactly once,
        \item $\Ws(x)$ and $\Wcu(y)$ intersect exactly once,
        \item if $x \in \Wcs(y)$, then $\Wc(x)$ and $\Ws(y)$ intersect exactly
        once, and
        \item if $x \in \Wcu(y)$, then $\Wc(x)$ and $\Wu(y)$ intersect exactly
        once.
    \end{enumerate}  \end{thm}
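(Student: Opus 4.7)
The plan is to work on the universal cover $\tilde M$ throughout, where the three given foliations lift to quasi-isometric foliations with properly embedded leaves, and to establish Global Product Structure and the quasi-isometry of $\Wcs$, $\Wcu$ in tandem, since the two properties reinforce each other. The quasi-isometry hypothesis on $\Wu$ and $\Ws$ already implies dynamical coherence by Brin's theorem, so $\Wcs$ and $\Wcu$ are available. Replacing $f$ by $f\inv$ exchanges the two halves of the statement, so it is enough to produce properties (1) and (3) of GPS and to prove quasi-isometry of $\Wcs$; then (2), (4), and the quasi-isometry of $\Wcu$ follow by symmetry. The two workhorses throughout will be a uniform local product structure of some size $\epsilon_0 > 0$, inherited from the compactness of $M$, and the exponential rates at which $f^{\pm n}$ contract or expand each invariant direction.

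For uniqueness in (1), suppose $p \neq q$ both lie in $\Wu(x) \cap \Wcs(y)$. Then $p \in \Wu(q) \cap \Wcs(q)$ with $p \neq q$, and local product structure forces $d_M(p,q) \ge \epsilon_0$. But $f^{-n}p$ and $f^{-n}q$ still lie in $\Wu(f^{-n}x) \cap \Wcs(f^{-n}y)$, and contraction of $\Wu$ under $f\inv$ drives $d_M(f^{-n}p, f^{-n}q) \le d_{\Wu}(f^{-n}p, f^{-n}q)$ to zero, contradicting the lower bound applied to the iterated pair. Uniqueness in (3) is analogous, carried out inside $\Wcs(y)$ using the contraction of $\Ws$ under $f^n$. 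For existence in (1), I would fix $y$ and show that the set $U = \{x \in \tilde M : \Wu(x) \cap \Wcs(y) \neq \emptyset\}$ is nonempty, open, and closed, hence all of $\tilde M$. Openness follows from local product structure, while closedness rests on passing to limits: if $x_n \to x$ and $z_n \in \Wu(x_n) \cap \Wcs(y)$, quasi-isometry of $\Wu$ prevents $z_n$ from escaping to infinity, so one extracts a limit $z \in \Wu(x) \cap \Wcs(y)$. The same scheme restricted to a single $\Wcs$-leaf handles existence in (3).

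With GPS in hand, fix $x,y$ on the same $\Wcs$-leaf and let $z = \Wc(x) \cap \Ws(y)$. Then
\[
    d_{\Wcs}(x,y) \le d_{\Wc}(x,z) + d_{\Ws}(z,y) \le Q\,d_M(x,z) + Q\,d_M(z,y) + 2Q,
\]
so quasi-isometry of $\Wcs$ reduces to a uniform bound $d_M(x,z), d_M(z,y) \le C\, d_M(x,y) + C$. I expect this last step to be the main obstacle of the proof: nothing in GPS alone prevents the corner $z$ from sitting far from the ambient geodesic joining $x$ and $y$. My plan to attack it is again dynamical: iterating $f^n$ exponentially contracts $d_{\Ws}(z,y)$, bringing $f^n z$ close to $f^n y$ in $\tilde M$, while the quasi-isometry of $\Wc$ controls $d_M(f^n x, f^n z)$ in terms of $d_{\Wc}(f^n x, f^n z)$. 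Comparing these estimates with how $f^n$ distorts ambient distances between $x$ and $y$ should sandwich $z$ near the $\tilde M$-segment from $x$ to $y$ and deliver the required constant $C$, closing the loop.
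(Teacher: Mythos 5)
The architecture you propose (coherence via Brin, then GPS, then quasi-isometry of the summed foliations) is close in spirit to the paper's, but at every crucial point your argument silently needs the one quantitative estimate that the paper isolates as its key technical step, Lemma \ref{tribound}: for an absolutely dominated pair with quasi-isometric foliations, if $y \in \Wu(x)$ and $z \in \Wcs(x)$ then $\max\{d_M(x,y),d_M(x,z)\} \le c\, d_M(y,z)$ with a uniform $c$. (i) In your uniqueness step, the claim that ``local product structure forces $d_M(p,q)\ge\epsilon_0$'' for distinct $p,q\in\Wu(q)\cap\Wcs(q)$ is unjustified: small ambient distance does not place $p$ in the local plaques of $q$ --- the leafwise connections may have length of order $Q$, about which local product structure says nothing. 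Indeed local product structure also holds downstairs on the compact manifold, where distinct intersection points of a $u$-leaf and a $cs$-leaf can be ambient-close (already for a linear Anosov map on $\T^2$), so the claimed dichotomy must use quasi-isometry/absolute domination quantitatively; as stated it is essentially the uniqueness you are trying to prove. (With Lemma \ref{tribound} in hand, uniqueness is immediate: take $y=z$.) (ii) In your closedness step, ``quasi-isometry of $\Wu$ prevents $z_n$ from escaping to infinity'' does not follow: quasi-isometry gives $d_u(x_n,z_n)\le Q\,d_M(x_n,z_n)+Q$, which is vacuous unless $d_M(x_n,z_n)$ is already known to be bounded --- and that is the whole issue; intersection points do escape to infinity for, e.g., the time-one map of a hyperbolic geodesic flow, where GPS fails. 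The paper gets the needed properness from the same lemma (a point at $u$-distance $T$ from a $cs$-leaf $L$ is at ambient distance at least $T/c$ from $L$) and then runs a tube/boundary argument with the sets $A_T$, $B_T$ rather than sequential compactness.

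(iii) For quasi-isometry of $\Wcs$ you correctly reduce to the corner bound $\max\{d_M(x,z),d_M(z,y)\}\le C\,d_M(x,y)+C$ for $z=\Wc(x)\cap\Ws(y)$, but you explicitly leave it as a plan; that bound is again exactly Lemma \ref{tribound} (applied to the splitting $\Es\oplus_<\Ec$ inside $cs$-leaves), i.e., the technical heart of the whole theorem, so the proposal stops short of the real work. Your heuristic is, however, the right one: the paper's proof splits into the trivial case $2d_M(x,y)\le d_M(x,z)$ and a rate-comparison case using absolute domination, quasi-isometry of both foliations and boundedness of $Tf$, yielding $(Q\inv\mu^n-2Q\gamma^n)d(x,y)\le\lambda^n d(y,z)$. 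If you supply that lemma, your uniqueness, closedness, and $\Wcs$ steps all go through, and your outline then essentially reproduces the paper's route (Theorems \ref{qigps} and \ref{nsum}), which proves the result for general $n$-fold dominated splittings and obtains items (3)--(4) by applying the same argument to $f$ restricted to the disjoint union of $cs$- (resp.\ $cu$-) leaves.
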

In the classification given for systems on the 3-torus, the first key step of
the proof was establishing Global Product Structure. As such, Theorem
\ref{introgps} may help to classify systems on other manifolds and in
higher dimensions.  Section \ref{apps} gives specific examples of its
application and the proof of the theorem is given in Section \ref{finer}.

There are partially hyperbolic systems where $\Wu$ and $\Ws$ are
quasi-isometric, but $\Wc$, $\Wcs$, and $\Wcu$ are not \cite{ham-nil}.  These
examples still have Global Product Structure.

\begin{question} \label{qgps}
    If $f:M \to M$ is partially hyperbolic, and the foliations $\Wu$ and $\Ws$
    (but not necessarily $\Wc$) are quasi-isometric on the universal cover,
    does $f$ have Global Product Structure?
\end{question}
On the 3-torus, partially hyperbolic systems fall into two categories:
skew products, where every center leaf is a compact circle, and
``Derived-from-Anosov'' systems, where the center foliation consists entirely
of lines.
In either case, any two leaves of the center foliation are homeomorphic, and
this is indicative of a more general principle.

\begin{thm} \label{introabel}
    Suppose $M$ is a compact manifold with an abelian fundamental group, $f:M \to
    M$ is partially hyperbolic with Global Product Structure, and
    $\Ws$ and $\Wu$ are quasi-isometric on the universal cover.  Then, any two
    leaves of $\Wc$ are homeomorphic.
\end{thm}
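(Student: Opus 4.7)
The plan is to split the argument into two parts, paralleling the way the result is phrased in the abstract: first show that every two center leaves on the universal cover $\tilde M$ are homeomorphic, using only Global Product Structure, and then show that the descent $\tilde M\to M$ preserves the homeomorphism type of center leaves, using quasi-isometry of $\Ws$ and $\Wu$ together with the commutativity of $\pi_1(M)$.

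The upstairs statement should come essentially for free from GPS. If $\Wc(\tilde x)$ and $\Wc(\tilde z)$ share a center-unstable leaf, then for each $x' \in \Wc(\tilde x) \subset \Wcu(\tilde z)$, applying GPS(4) to any point of $\Wc(\tilde z)$ and to $x'$ shows that $\Wc(\tilde z) \cap \Wu(x')$ is a single point. This defines an unstable-holonomy map $\Wc(\tilde x)\to\Wc(\tilde z)$; the uniqueness coming from GPS together with the continuous dependence of local unstable manifolds on the basepoint makes it a homeomorphism, and a symmetric argument handles center leaves sharing a center-stable leaf. For arbitrary $\tilde x,\tilde y\in\tilde M$, GPS(1) supplies $z\in\Wu(\tilde x)\cap\Wcs(\tilde y)$, so $\Wc(\tilde x)$ and $\Wc(z)$ share a center-unstable leaf while $\Wc(z)$ and $\Wc(\tilde y)$ share a center-stable leaf; composing the two holonomies yields $\Wc(\tilde x)\cong\Wc(\tilde y)$.

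The principal obstacle is the step alluded to in the abstract: proving that $\Wc$ is without holonomy on $M$. A putative nontrivial holonomy element corresponds to a deck transformation $\gamma\in\pi_1(M)$ that preserves a lift $\tilde L$ of some center leaf setwise, but moves a small transversal. I would exploit quasi-isometry to set up coarse linear coordinates on $\tilde M$ in which $\Wu$- and $\Ws$-leaves look like their tangent flats, so that every element of $\pi_1(M)$ acts by bounded perturbations of translations in these coordinates. The abelian hypothesis then gives rigidity: for every other $\gamma'\in\pi_1(M)$, the equality $\gamma\gamma'=\gamma'\gamma$ transported into the $\Wu$/$\Ws$ coordinate system forces the transversal displacement produced by $\gamma$ to be preserved by the full deck orbit, and since that orbit is cocompact this displacement must vanish. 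Making this rigorous---in particular, controlling how the coarse coordinates interact with the $\Wc$-transversal direction---is where I expect the bulk of the technical work to lie.

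With no-holonomy in hand, each leaf $L\subset M$ is the quotient $\tilde L/\Gamma_L$ where $\Gamma_L\subset\pi_1(M)$ is the setwise stabilizer of a chosen lift and acts freely and properly by the no-holonomy property. The stable and unstable holonomies used in the first step descend from foliations defined on $M$, so they are equivariant under the full deck action of $\pi_1(M)$; together with commutativity, this identifies $\Gamma_L$ with $\Gamma_{L'}$ as subgroups of $\pi_1(M)$ and makes the upstairs homeomorphism $\tilde L\to\tilde L'$ equivariant, so passing to quotients gives the desired $L\cong L'$.
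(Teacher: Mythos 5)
Your decomposition — prove the leaves upstairs are homeomorphic via Global Product Structure, then descend after establishing that $\Wc$ is without holonomy — is exactly the paper's (Theorems \ref{intronoholo} and \ref{introhomeo}), and your first step (the unstable/stable holonomy maps given by GPS, composed across a $cu$-leaf and a $cs$-leaf) matches the paper's construction. The genuine gap is in the step you yourself flag as the main obstacle: no-holonomy. Your proposed mechanism never uses the dynamics of $f$. Quasi-isometry of $\Ws$ and $\Wu$ does not provide ``coarse linear coordinates'' on $\tilde M$ in which deck transformations act as bounded perturbations of translations; that kind of global affine model is special to tori and nilmanifolds (Franks--Manning), not a consequence of the hypotheses here. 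Moreover, a nontrivial holonomy germ is not a ``transversal displacement'' that can be transported around the deck orbit and killed by cocompactness; it can be, for instance, the germ of a contraction of the transversal, and the loop carrying it interacts with $\pi_1(M)$ only through a homotopy class, not through a displacement vector. The paper's argument is dynamical: from a nontrivial holonomy loop $\alpha$ on a center leaf one extracts points $y,z\in\Wu(x)$ and paths $\beta\subset\Wc(y)$, $\gamma,\theta\subset\Wu(x)$ with $\theta\cdot\beta\cdot\gamma\cdot\theta\inv\simeq\alpha$, then iterates by $f^n$: the center curve $\alpha_n$ has length at most $\lambda^n\length\alpha$, while the unstable separation satisfies $d(\tilde\gamma_n(0),\tilde\gamma_n(1))\ge Q\inv\mu^n d_u(z,y)$. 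The abelian hypothesis is used at exactly one point: to replace the connecting unstable path $\theta_n$ (whose length explodes) by a path $\phi_n$ of length at most $\diam M$ without changing the based homotopy class, so that the lifted curves $\tilde\alpha_n$ and $\tilde\omega_n$ have the same endpoints and the two growth estimates contradict each other for large $n$. Some argument of this kind, using the absolute domination of $\Ec$ by $\Eu$, is indispensable; the coarse-geometric rigidity you sketch does not substitute for it.

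A secondary issue is your descent step. You assert that the unstable-holonomy map $\tilde L\to\tilde L'$ is equivariant and that the stabilizers $\Gamma_L$ and $\Gamma_{L'}$ coincide ``by commutativity,'' but a deck transformation $\gamma$ stabilizing $\tilde L$ sends $\tilde L'$ to $\gamma(\tilde L')$, not to $\tilde L'$, so equivariance is precisely what has to be proved, and it is here (not in the group theory) that no-holonomy enters. The paper handles this by defining $\phi(s,t)=\Wu(\alpha(s))\cap\Wc(\beta(t))$ and showing that $S=\{t: p(\phi(0,t))=p(\phi(1,t))\}$ is closed by continuity and open by absence of holonomy, hence equal to $[0,1]$; this yields $p(h(x_1))=p(h(x_2))$ directly (and, incidentally, uses no abelian hypothesis — abelianness is needed only for the no-holonomy theorem). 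As written, your outline assumes this conclusion rather than deriving it.
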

\begin{cor}
    Suppose $M$ is a compact manifold with an abelian fundamental group, $f:M \to
    M$ is partially hyperbolic and $\Wu$, $\Wc$, and $\Ws$ are quasi-isometric on
    the universal cover.  Then, any two leaves of $\Wc$ are homeomorphic.
\end{cor}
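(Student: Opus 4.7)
The plan is short: this corollary is an immediate consequence of combining Theorem \ref{introgps} with Theorem \ref{introabel}, so all I need to do is verify that the hypotheses of the two theorems chain together correctly.

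First I would apply Theorem \ref{introgps}. The hypothesis of the corollary asserts that $\Wu$, $\Wc$, and $\Ws$ are all quasi-isometric on the universal cover $\tilde M$, which is exactly the hypothesis of Theorem \ref{introgps}. That theorem then yields that $f$ has Global Product Structure (and, as a bonus, that $\Wcs$ and $\Wcu$ are also quasi-isometric, though I will not need this here).

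Next I would invoke Theorem \ref{introabel}. To apply it I need three things: that $M$ is compact with abelian fundamental group (given in the corollary), that $f$ has Global Product Structure (just established via Theorem \ref{introgps}), and that $\Ws$ and $\Wu$ are quasi-isometric on $\tilde M$ (a weakening of the hypothesis of the corollary, so certainly given). The conclusion of Theorem \ref{introabel} is precisely that any two leaves of $\Wc$ are homeomorphic, which is what the corollary demands.

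There is no real obstacle here; the only thing to be mindful of is that the quasi-isometry hypothesis on $\Wc$ in the corollary is used only to obtain Global Product Structure, not directly in the homeomorphism argument. In particular, if Question \ref{qgps} were answered affirmatively, one could strengthen the corollary by dropping the quasi-isometry assumption on $\Wc$, reducing the hypotheses to those of Theorem \ref{introabel} itself.
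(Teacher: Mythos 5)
Your proposal is correct and follows exactly the route the paper intends: Theorem \ref{introgps} supplies Global Product Structure from the three quasi-isometry hypotheses, and then Theorem \ref{introabel} gives the homeomorphism of center leaves. The verification that the hypotheses chain together, and the observation about where the quasi-isometry of $\Wc$ is actually used, are both accurate.
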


The assumption of an abelian fundamental group cannot be removed, as
shown by the following example.  Let $M = \T^2 \times \bbS^2$ and let $A:\T^2
\to \T^2$ be a hyperbolic toral automorphism so that $f = A \times \id:M \to M$
is partially hyperbolic and each center leaf is a sphere $\{x\} \times \bbS^2$.
If $-y$ denotes the antipode of $y \in \bbS^2$, then $f$ commutes with $\tau:M
\to M$, $(x,y) \mapsto (-x,-y)$ and the resulting diffeomorphism on the quotient
space $M/\tau$ has four leaves homeomorphic to the projective plane while the
rest of the leaves are spheres.

\begin{question} \label{qhomeo}
    If $f:M \to M$ is partially hyperbolic and $\Wu$ and $\Ws$ are
    quasi-isometric when lifted to the universal cover, then is there a finite
    cover of $M$ on which any two leaves of $\Wc$ homeomorphic?
\end{question}
The key to proving Theorem \ref{introabel} is to consider holonomies along the
center foliation.  We say that $\Wc$ is \emph{without holonomy} if the holonomy
along every closed loop on a leaf of $\Wc$ is the identity.  Using this,
Theorem \ref{introabel} can be decomposed into two results, each interesting in
its own right.

\begin{thm} \label{intronoholo}
    Suppose $M$ is a compact connected manifold with an abelian fundamental
    group, and $f:M \to M$ is partially hyperbolic.  If $\Wu$ and
    $\Ws$ are quasi-isometric when lifted to the universal cover, then the
    foliations $\Wc$, $\Wcs$, and $\Wcu$ are without holonomy.
\end{thm}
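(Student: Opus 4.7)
The plan is to treat $\Wcs$ in detail; the case of $\Wcu$ is symmetric, and the result for $\Wc$ follows since $\Wc = \Wcs \cap \Wcu$. Fix a loop $\alpha$ in a leaf of $\Wcs$ based at $x_0 \in M$; it lifts to a path from $\tilde x_0$ to $\phi \tilde x_0$ in $\tilde M$, where the deck transformation $\phi \in \pi_1(M)$ preserves the leaf $\tilde\Wcs(\tilde x_0)$ setwise. The holonomy of $\alpha$ is trivial precisely when $\phi \tilde y \in \tilde\Wcs(\tilde y)$ for every $\tilde y$ in the $\Wu$-transversal through $\tilde x_0$.

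Suppose for contradiction that the holonomy is nontrivial, so there exists $\tilde y_0 \in \tilde\Wu(\tilde x_0)$ arbitrarily close to $\tilde x_0$ with $\phi \tilde y_0 \notin \tilde\Wcs(\tilde y_0)$, and the transverse $\Wu$-shift $\delta>0$ between $\phi \tilde y_0$ and $\tilde\Wcs(\tilde y_0)\cap\tilde\Wu(\phi\tilde x_0)$ is defined using local product charts for $\Wcs, \Wu$ near $\phi\tilde x_0$. The abelian hypothesis provides useful rigidity: commutativity gives $\phi(\eta \tilde x_0) = \eta(\phi \tilde x_0) \in \tilde\Wcs(\eta \tilde x_0)$ for every $\eta \in \pi_1(M)$, so $\phi$ simultaneously preserves every $\pi_1(M)$-translate of the original leaf. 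Moreover, since $\pi_1(M)$ is abelian and acts freely on $\tilde M$ it must be torsion-free, so $\pi_1(M)\cong\Z^k$, and the Milnor--Svarc lemma gives linear orbit growth $d_{\tilde M}(\tilde x_0, \phi^n \tilde x_0) \asymp |n|$.

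The contradiction would come from amplifying the transverse shift $\delta$ using the partial hyperbolic dynamics. Lift $f$ to $\tilde f: \tilde M \to \tilde M$; writing $\tilde f \phi = f_*(\phi) \tilde f$, the deck transformation $f_*^n(\phi)$ preserves the $\Wcs$-leaf $\tilde\Wcs(\tilde f^n \tilde x_0)$, while the associated holonomy is the conjugate of $h_\alpha$ under the identifications induced by $\tilde f^n$. The uniform expansion $\|Tf^n|_{\Eu}\|>\mu$ magnifies the transverse $\Wu$-displacement geometrically, whereas the length of the conjugated loop $\tilde f^n \alpha$ inside $\tilde\Wcs(\tilde f^n \tilde x_0)$ grows only at the slower center rate bounded in terms of $\gamma$. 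Combined with the quasi-isometry bound $d_{\Wu}\le Q\,d_{\tilde M}+Q$ and with $d_{\tilde M}(\tilde f^n \tilde x_0, f_*^n(\phi)\tilde f^n \tilde x_0)$ controlled by the loop length, the mismatched growth rates force $\delta=0$ and hence trivial holonomy.

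The main obstacle will be making the quantitative amplification work uniformly. When the linearized holonomy on the $\Wu$-transversal has eigenvalue off the unit circle, iterating $\phi$ alone gives geometric growth of the shift while the ambient distance grows only linearly in $|n|$, and the contradiction is direct from quasi-isometry. The unipotent case --- where the $\Wu$-derivative is the identity and the holonomy is nontrivial only to higher order --- is more delicate: there the $\tilde f$-amplification above is essential, and one must carefully track how the partial hyperbolic rates $\mu,\gamma$ interact with the action of $f_*$ on $\pi_1(M)\cong\Z^k$ to guarantee strict outpacing of the linear orbit growth predicted by Milnor--Svarc. This careful interplay between the abelian orbit structure, the coarse geometry imposed by quasi-isometry of $\Wu$ and $\Ws$, and the expansion/contraction rates of $Df$ is the technical heart of the argument.
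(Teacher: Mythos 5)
Your overall strategy---iterate $f$, let the unstable expansion magnify the holonomy displacement, bound the image of the loop by the slower center(-stable) rate, and use quasi-isometry of $\Wu$ on $\tilde M$ to convert leafwise distance into ambient distance---is the right spirit, but the amplification step as you describe it does not close, and this is precisely where the real idea is needed. After applying $\tilde f^n$, your lower bound is $Q\inv\mu^n\delta$ for the distance between $\tilde f^n(\phi\tilde y_0)$ and $\tilde f^n(\tilde w_0)$, where $\tilde w_0=\Wcs(\tilde y_0)\cap\Wu(\phi\tilde x_0)$. To contradict it you need an upper bound growing strictly slower than $\mu^n$, but every natural chain of comparisons passes through the unstable arc from $\tilde x_0$ to $\tilde y_0$ (or its $\phi$-image), and that arc \emph{also} expands at rate at least $\mu^n$; since the holonomy shift $\delta$ may be far smaller than $d_u(\tilde x_0,\tilde y_0)$ (the germ being nontrivial gives no lower bound on $\delta$ relative to the transversal scale), there is no separation of rates and no contradiction. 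The quantities you propose to control---the length of $\tilde f^n\circ\tilde\alpha$, and $d(\tilde f^n\tilde x_0, f_*^n(\phi)\tilde f^n\tilde x_0)$---are indeed of order $\gamma^n$, but they do not by themselves bound the distance you need. The paper resolves exactly this difficulty, and it is the only place the abelian hypothesis is really used: writing $[\alpha]=[\theta\cdot\beta\cdot\gamma\cdot\theta\inv]$ with $\theta,\gamma$ unstable arcs and $\beta$ the holonomy transport, one applies $f^n$ and then \emph{replaces} the connector $\theta_n=f^n\circ\theta$ (whose length blows up like $\mu^n$) by a path $\phi_n$ of length at most $\diam M$; the identity $[\theta_n\beta_n\gamma_n\theta_n\inv]=[\phi_n\beta_n\gamma_n\phi_n\inv]$ holds because the two loops differ by a conjugation and $\pi_1(M)$ is abelian. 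After this swap the only exponentially growing term in the lifted relation is the arc $\tilde\gamma_n$, whose endpoint separation is at least $Q\inv\mu^n d_u(\gamma(0),\gamma(1))$ with $d_u(\gamma(0),\gamma(1))>0$ fixed, while the endpoints of $\tilde\alpha_n$ are within $\lambda^n\length\alpha$; this is the rate mismatch that yields the contradiction. Your proposal never performs (or replaces) this step, and you essentially concede the gap yourself in the final paragraph.

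Two secondary points. First, the dichotomy you draw between ``eigenvalue off the unit circle'' and ``unipotent'' holonomy is not available: the holonomy along $\Wc$ or $\Wcs$ is only a germ of a homeomorphism of a continuous transversal, with no linearization, and the paper's argument needs no such case division. Second, the uses you do make of the abelian hypothesis are not the operative ones: that $\phi$ preserves every $\pi_1(M)$-translate of the leaf is not needed, and the claim that $\pi_1(M)$ is torsion-free (hence $\cong\Z^k$, giving Milnor--\v{S}varc linear growth of $\phi^n$-orbits) is unjustified---$\tilde M$ need not be contractible, and the paper's own example $\T^2\times\bbS^2$ quotiented by an involution shows the setting naturally meets non-aspherical manifolds; in any case no iteration of $\phi$ is required once the bounded-connector trick is in place.
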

\begin{thm} \label{introhomeo}
    Suppose $M$ is a connected manifold and $f:M \to M$ is partially
    hyperbolic. If the center foliation $\Wc$ exists and is without holonomy, and
    $f$ has Global Product Structure, then all leaves
    of $\Wc$ (on $M$) are homeomorphic.
\end{thm}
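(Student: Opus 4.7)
The plan is to work on the universal cover $\tilde M$, use Global Product Structure to construct a homeomorphism between any two leaves of the lifted center foliation, and then exploit the absence of holonomy to show that this homeomorphism descends to $M$.

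Given leaves $L_1,L_2$ of $\Wc$ with lifts $\tilde L_1,\tilde L_2\subset\tilde M$, I would first use GPS condition (1) to pick $\tilde z\in\Wu(\tilde L_1)\cap\Wcs(\tilde L_2)$ and set $\tilde L_3=\Wc(\tilde z)$. Inside $\Wcu(\tilde L_1)=\Wcu(\tilde L_3)$, condition (4) makes $\tilde x\mapsto\Wu(\tilde x)\cap\tilde L_3$ a well-defined homeomorphism $\tilde L_1\to\tilde L_3$; inside $\Wcs(\tilde L_3)=\Wcs(\tilde L_2)$, condition (3) gives a stable-holonomy homeomorphism $\tilde L_3\to\tilde L_2$. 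Composing produces a homeomorphism $\Phi\colon\tilde L_1\to\tilde L_2$.

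The main obstacle is showing that every lifted center leaf has the same stabilizer $H$ inside $\Gamma=\pi_1(M)$. For $g$ in the stabilizer $H(\tilde L)$ of a lift $\tilde L$, realize $g$ by a loop $\gamma$ at $x=\pi(\tilde x)\in L$; its lift $\tilde\gamma$ runs from $\tilde x$ to $g\tilde x$ inside $\tilde L$. By uniqueness of continuous lifts, the holonomy germ of $\tilde\gamma$ along a small lifted transversal $\tilde T$ at $\tilde x$ coincides with the germ of the deck transformation $g$. Because $\Wc$ is without holonomy, the holonomy of $\gamma$ is the identity on $\pi(\tilde T)$, and this lifts to $g\tilde p\in\tilde L_p$ for every $\tilde p\in\tilde T$ sufficiently close to $\tilde x$, where $\tilde L_p$ denotes the leaf through $\tilde p$. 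Hence $g\in H(\tilde L_p)$, and applying the same argument to $\tilde L_p$ yields $H(\tilde L)=H(\tilde L_p)$. Consequently, for each subgroup $K\subset\Gamma$ the union of those lifts $\tilde L$ with $H(\tilde L)=K$ is saturated and open in $\tilde M$; these sets partition $\tilde M$, so by connectedness only one is nonempty, giving the common stabilizer $H$.

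With a single $H$ shared by $\tilde L_1,\tilde L_2,\tilde L_3$, the $\Gamma$-invariance of $\Wu$ and $\Ws$ forces $\Phi(h\tilde x)=h\Phi(\tilde x)$ for every $h\in H$: at each holonomy step one has $\Wu(h\tilde x)=h\Wu(\tilde x)$ (and likewise for $\Ws$), while $h$ fixes $\tilde L_3$ and $\tilde L_2$, so each intersection commutes with $h$. Therefore $\Phi$ is $H$-equivariant and descends to a homeomorphism $L_1=\tilde L_1/H\to\tilde L_2/H=L_2$.
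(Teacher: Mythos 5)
Your construction of $\Phi$ via the Global Product Structure is the same map the paper uses, and your final step (equivariance of $\Phi$ under a common stabilizer $H$, then descent to the quotients $L_i=\tilde L_i/H$) would indeed work \emph{if} you had established that all lifted center leaves share the same stabilizer in $\pi_1(M)$. The gap is precisely there. Your holonomy argument only gives one inclusion: if $g\in H(\tilde L)$, represented by a loop $\gamma$ inside the leaf, then triviality of the holonomy of $\gamma$ shows $g\in H(\tilde L_p)$ for all $\tilde p$ in some transversal neighbourhood of $\tilde x$ --- and that neighbourhood depends on $g$ (the holonomy map is only defined on a small plaque $\tau_0$ depending on the loop). ``Applying the same argument to $\tilde L_p$'' does not give $H(\tilde L_p)\subset H(\tilde L)$: for $g'\in H(\tilde L_p)$ you get triviality of holonomy on some small neighbourhood of $\tilde p$ depending on $g'$, with no guarantee that $\tilde x$ lies in it. So what you have proved is that each set $U_g=\{\tilde p: g\in H(\tilde L_{\tilde p})\}$ is open (the stabilizer can only grow under small perturbations of the leaf), not that the sets $\{\tilde p: H(\tilde L_{\tilde p})=K\}$ are open. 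Without the reverse inclusion these sets need not be open, the clopen-partition argument collapses, and you cannot conclude a common $H$. Note also that closedness of $U_g$ is genuinely nonobvious: if $\tilde p_n\to\tilde p$ with $g\tilde p_n\in\tilde L_{\tilde p_n}$, the leafwise distance from $\tilde p_n$ to $g\tilde p_n$ could blow up, so the limit leaf need not be $g$-invariant; the theorem's hypotheses here are only GPS and no holonomy, so you cannot call on quasi-isometry to control this.

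This missing step is exactly where the paper's proof does its work, and it avoids the problem by never asking the ``global'' question of whether $g$ stabilizes a whole leaf. Given $x_1,x_2$ on one lifted center leaf with $p(x_1)=p(x_2)$, the paper takes a center path $\alpha$ from $x_1$ to $x_2$, an unstable path $\beta$ from $x_1$ to $h(x_1)$, sets $\phi(s,t)=\Wu(\alpha(s))\cap\Wc(\beta(t))$ (well defined and continuous by GPS), and studies $S=\{t: p(\phi(0,t))=p(\phi(1,t))\}$: this condition is \emph{closed} simply by continuity of $p$ and $\phi$, and \emph{open} by the no-holonomy hypothesis, so $S=[0,1]$ and $h$ maps fibers of $p$ to fibers of $p$ (the reverse implication comes from swapping the roles of the two leaves). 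In effect the paper proves the two stabilizer inclusions by tracking specific GPS intersection points along a compact path, where the descent condition is closed for free. To repair your argument you would need an analogous mechanism (and it would essentially reproduce the paper's proof); as written, the assertion $H(\tilde L)=H(\tilde L_p)$, and hence the common stabilizer $H$, is unjustified.
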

The proof of Theorem \ref{intronoholo} is based on properties of liftings of
freely homotopic curves.  These properties rely on the fundamental group being
abelian and do not hold even for the most obvious generalization to manifolds
with nilpotent fundamental groups.  That said, the only counterexamples we
know of are systems such as the example given above, where the center
foliation lifts to a trivial fibering on a finite cover.

\begin{question} \label{qnoholo}
    Suppose $M$ is a compact connected manifold, $f:M \to M$ is 
    partially hyperbolic, and $\Wu$ and $\Ws$ are quasi-isometric when lifted to
    the universal cover.  Modulo a lift to a finite cover, is the center
    foliation $\Wc$ without holonomy?
\end{question}
The author's aim in researching this topic was to determine exactly which
3-dimensional manifolds support partially hyperbolic systems with
quasi-isometric invariant foliations.  Alas, this goal has remained elusive.

\begin{question} \label{qnil}
    For which manifolds $M$ is there $f:M \to M$ is partially hyperbolic, with
    $\dim \Es=\dim \Ec=\dim \Eu=1$, and $\Wu$ and $\Ws$ quasi-isometric on the
    universal cover?
\end{question}
The following fact shows that positive answers to Questions \ref{qgps} and
\ref{qnoholo} will give an answer to Question \ref{qnil}.

\begin{thm} \label{easynil}
    If $f:M \to M$ is partially hyperbolic with Global Product Structure,
    $\dim \Es=\dim \Ec=\dim \Eu=1$,
    and $\Wcs$ (or $\Wcu$) is without holonomy, then $M$
    a circle bundle over $\bbT^2$.
\end{thm}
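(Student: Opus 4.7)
\bigskip

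\noindent\textbf{Proof sketch.}
The plan is to exhibit $M$ (or a finite cover) as a $\T^2$-bundle over $S^1$ with non-hyperbolic monodromy, which is equivalent to being an $S^1$-bundle over $\T^2$. Without loss of generality assume it is $\Wcs$ that is without holonomy.

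First, standard dynamical arguments show that every leaf of $\Ws$ and $\Wu$ is homeomorphic to $\R$: a circle leaf of $\Ws$ would have its length contracted to zero under iteration of $f$, violating the positive lower bound on injectivity radii in the compact manifold $M$, and a symmetric argument via $f\inv$ treats $\Wu$. Combined with GPS this already forces $\tilde M \cong \R^3$ and each lifted $\Wcs$-leaf to be homeomorphic to $\R^2$.

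Next, $\Wcs$ is a codimension-one $C^r$ foliation of the closed $3$-manifold $M$ without holonomy. By the Sacksteder--Tischler structure theory for such foliations, after possibly passing to a finite cover, $M$ fibers smoothly over $S^1$ with closed 2-manifold fibers $F$ close to (or equal to) the $\Wcs$-leaves. Each $F$ inherits a non-singular one-dimensional foliation from $\Ws$ whose leaves are all non-compact, so $\chi(F)=0$; hence $F$ is a torus or a Klein bottle, and after at most a further double cover we may take $F=\T^2$. Thus $M$ is topologically a torus bundle over $S^1$ with some monodromy $\phi\in GL(2,\Z)$.

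The monodromy $\phi$ must preserve both of the transverse one-dimensional foliations of $\T^2$ induced by $\Ws$ and $\Wc$; the $\Ws$-direction is irrational. The remaining point is to rule out a hyperbolic $\phi$. If $\phi$ were hyperbolic, both $\phi$-invariant directions in $F$ would be irrational and $M$ would carry Sol geometry; then GPS (applied at a periodic point of a suitable iterate $\phi^n$) would yield a closed $\Wc$-orbit in $M$, and the $\Wcs$-holonomy along that orbit, measured on the transverse $\Wu$-direction, would be the corresponding unstable eigenvalue of $\phi^n$, contradicting the no-holonomy hypothesis. Hence $\phi$ preserves a rational $1$-subgroup $L\subset \Z^2=\pi_1(\T^2)$.

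This $\phi$-invariant rational subgroup $L$ induces a $\phi$-equivariant circle fibration $\T^2\to S^1$, which globalizes to an $S^1$-fibration $M\to B$; the bundle structure identifies $B\cong\T^2$ and so $M$ is an $S^1$-bundle over $\T^2$, with a routine argument descending the conclusion if we passed to a finite cover in the torus-vs-Klein-bottle step. The main obstacle is the third paragraph: one must rigorously extract a closed $\Wc$-orbit from GPS and from a periodic point of $\phi^n$, and then verify that the resulting $\Wcs$-holonomy on the transverse $\Wu$-direction is indeed nontrivial in the hyperbolic case --- linking the topological monodromy of the torus bundle to the holonomy of the invariant foliation is the delicate step.
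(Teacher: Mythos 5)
The decisive step of your argument --- the third paragraph, which you yourself flag --- is a genuine gap, and it is precisely the place where the no-holonomy hypothesis has to do its work. The monodromy $\phi$ of a Tischler-type fibration is only a topological isotopy class obtained by approximating the foliation; the fibers are merely close to, not equal to, $\Wcs$-leaves, so the ``foliations of $F$ induced by $\Ws$ and $\Wc$'' are not canonically defined, and nothing forces $\phi$ to preserve them, since $\phi$ is not induced by $f$ nor by any holonomy that respects $\Ws$ or $\Wc$ separately. Likewise, a periodic point of $\phi^n$ does not produce a closed $\Wc$-leaf, and there is no identification between the germinal holonomy of $\Wcs$ and an eigenvalue of $\phi^n$; so the contradiction with ``without holonomy'' in the hyperbolic (Sol) case is asserted, not proved. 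The paper makes the hypothesis bite in a different, purely group-theoretic way: for a codimension-one foliation without holonomy, $[\pi_1(M),\pi_1(M)]\subset\pi_1(S)$ for every leaf $S$ (cited in the $C^0$ setting); a separate proposition, using Global Product Structure, the freeness of the $\pi_1(S)$-action on a center leaf, H\"older's theorem on free actions on the line, and an argument excluding torus leaves, shows every $\Wcs$-leaf is a plane, cylinder, or M\"obius band, so $\pi_1(S)$ is cyclic. Hence $\pi_1(M)$ is solvable with cyclic commutator subgroup, infinite by Parwani's classification, therefore nilpotent by the Evans--Moser classification of solvable $3$-manifold groups, and Parwani's theorem then gives the circle bundle over $\bbT^2$. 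This is exactly what excludes hyperbolic monodromy, and your proposal contains no working substitute for it.

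Two further points would also need repair even if the Sol case were handled. First, Sacksteder's theorem requires $C^2$ regularity, which $\Wcs$ does not in general possess (it is typically only a continuous foliation with $C^1$ leaves); this is why the paper is careful to quote a $C^0$ version of the commutator-subgroup result rather than the classical structure theory. Second, the closing ``routine argument descending the conclusion'' from a finite cover is not routine: a $3$-manifold finitely covered by a circle bundle over $\bbT^2$ (for instance by $\bbT^3$) need not itself be such a bundle, as the non-toral flat $3$-manifolds show. The paper avoids this by proving that $\pi_1(M)$ itself is nilpotent and applying Parwani's theorem directly to $M$, not to a cover; your outline would need an analogous statement about $M$ itself.
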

Section \ref{secholo} contains the proofs of 
Theorems \ref{intronoholo}, \ref{introhomeo}, and \ref{easynil}.

\medskip

Above, we stated Theorem \ref{introgps} in terms of the three-way splitting
of a partially hyperbolic system. In fact, the theorem generalizes to
arbitrary $n$-way splittings.  As this requires more notation to state, we
leave the exposition until Section \ref{finer}.
In an Anosov system, $TM$ has a two-way splitting into contracting $\Es$ and
expanding $\Eu$ bundles, and the result can be stated as follows.

\begin{thm} \label{introanosov}
    If $f:M \to M$ is Anosov and $\Wu$ and $\Ws$ are quasi-isometric on the
    universal cover $\tilde M$, then $f$ has Global Product Structure, that is,
    for every $x,y \in \tilde M$, the intersection $\Ws(x) \cap \Wu(y)$ consists of
    exactly one point.
\end{thm}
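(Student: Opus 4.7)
The plan is to deduce Theorem~\ref{introanosov} from the $n$-way splitting generalization of Theorem~\ref{introgps} promised in Section~\ref{finer}, applied to the $2$-way splitting $TM = \Es \oplus \Eu$. Since an Anosov system has no center bundle, $\Wcs$ coincides with $\Ws$ and $\Wcu$ with $\Wu$, so conditions~(3) and~(4) of Global Product Structure become vacuous, and conditions~(1) and~(2) both reduce to the single assertion $|\Ws(x) \cap \Wu(y)| = 1$.

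Independently of that general machinery, I would verify the two halves of this assertion—uniqueness and existence of the intersection—directly. Uniqueness is a short dynamical argument. If $p \ne q$ both lie in $\Ws(x) \cap \Wu(y)$, then forward iteration contracts stable leaf length, giving $d_{\Ws}(f^k p, f^k q) \le \lambda^k d_{\Ws}(p,q) \to 0$ and hence $d(f^k p, f^k q) \to 0$. Quasi-isometry of $\Wu$ then yields
\[
    d_{\Wu}(f^k p, f^k q) \le Q\, d(f^k p, f^k q) + Q,
\]
so the unstable leaf distance stays bounded, contradicting the uniform exponential expansion of $\Wu$ leaf length under $f$.

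For existence, I would fix $x \in \tilde M$ and study $U = \{y \in \tilde M : \Ws(x) \cap \Wu(y) \ne \emptyset\}$. Local product structure, which holds around every point of $\tilde M$ with uniform size by compactness of $M$, makes $U$ open and nonempty. Moreover, leaves of a quasi-isometric foliation are closed (proper) subsets of $\tilde M$, so once a sequence $p_n = \Ws(x) \cap \Wu(y_n)$ with $y_n \to y$ is known to lie in a compact set, any convergent subsequence supplies a limit point in $\Ws(x) \cap \Wu(y)$. Connectedness of $\tilde M$ then forces $U = \tilde M$.

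The main obstacle is precisely the boundedness of such a sequence $\{p_n\}$: a priori the intersection point could escape to infinity along $\Ws(x)$. My intention is to rule this out by combining the quasi-isometric bounds $d_{\Ws}(p_n, x) \le Q\, d(p_n, x) + Q$ and $d_{\Wu}(p_n, y_n) \le Q\, d(p_n, y_n) + Q$ with the uniform scale of local product neighbourhoods on $\tilde M$ and the boundedness of $\{y_n\}$, producing an estimate of the form $d(p_n, x) \le C\, d(x, y_n) + C$ for some uniform constant $C$. This is the technical heart of the matter and is exactly the closedness step carried out in the general $n$-way version of Theorem~\ref{introgps}.
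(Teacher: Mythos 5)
Your reduction to the two-bundle splitting is exactly how the paper obtains this theorem: Theorem \ref{introanosov} is the case $E = TM = \Es \oplus_< \Eu$ of Theorem \ref{qigps}, with the contracting bundle playing the role of $\Ec$. Your uniqueness argument is sound -- forward iteration shrinks the stable leaf distance, and quasi-isometry of $\Wu$ then caps an unstable leaf distance that must grow exponentially -- and it is close in spirit to the paper's, which reads uniqueness off Lemma \ref{tribound} applied with $y=z$.

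The gap is in the existence half. The estimate you defer, $d(p_n,x) \le C\,d(x,y_n)+C$ for $p_n \in \Ws(x)\cap\Wu(y_n)$, is precisely Lemma \ref{tribound} of the paper, and it does not follow from the ingredients you propose to combine (quasi-isometry of each foliation, uniform local product neighbourhoods, boundedness of $\{y_n\}$). It is a genuinely dynamical estimate: the paper proves it by applying $f^n$, using absolute domination to compare the contraction rate $\gamma^n$ along the dominated bundle at one point with the expansion rate $\mu^n$ along $\Eu$ at another point, the uniform bound $\lambda^n$ on $\|Tf^n\|$ to control $d(f^n y,f^n z)$, and quasi-isometry to pass between leafwise and ambient distances, arriving at $(Q\inv\mu^n - 2Q\gamma^n)\,d(x,y) \le \lambda^n d(y,z)$. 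Without this step your closedness argument is unsupported: transversality plus quasi-isometry of the individual foliations does not by itself prevent the intersection point from escaping to infinity along $\Ws(x)$, which is exactly why the paper stresses that all of its proofs use the absolute form of domination. Note also that the paper's existence argument is organized differently: rather than an open/closed argument on the set of $y$ with $\Wu(y)\cap\Ws(x)\ne\emptyset$, it fixes the leaf $L=\Ws(x)$, forms the tubes $A_T$ of unstable $T$-balls centered on $L$, and uses Lemma \ref{tribound} to show $\dist(\partial A_T,L)\ge T/c$, so that $A_T$ eventually engulfs any point of the component; your open/closed variant would also work, but only once the Lemma \ref{tribound} estimate is actually established.
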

If $f:M \to M$ is an Anosov diffeomorphism, it is a famous open question if $M$
is finitely covered by a nilmanifold.  Further, it is not known
if the non-wandering set of $f$ is equal to $M$ or if the universal
cover of $M$ is homeomorphic to $\R^n$.  In \cite{Brin-nw}, Brin defined
pinching conditions on the derivative of an Anosov system under which the
system necessarily has Global Product Structure (referred to in that paper as
the property of ``infinite extension'').  He further showed that if an Anosov
diffeomorphism has Global Product Structure, then its non-wandering set is the
entire manifold, and the universal cover is homeomorphic to $\R^n$.  As such,
Theorem \ref{introanosov} shows that any Anosov diffeomorphism with
quasi-isometric foliations also enjoys these properties.

\begin{cor}
    If $f:M \to M$ is Anosov and $\Wu$ and $\Ws$ are quasi-isometric on the
    universal cover $\tilde M$, then $\Omega(f)=M$ and $\tilde M$ is homeomorphic
    to $\bbR^n$.
\end{cor}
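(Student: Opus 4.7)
The plan is straightforward: this corollary is a direct consequence of Theorem \ref{introanosov} combined with results already established by Brin in \cite{Brin-nw}, and the proof requires no essentially new input beyond chaining two implications.

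First, I would apply Theorem \ref{introanosov} to the given Anosov diffeomorphism $f:M \to M$. Since $\Wu$ and $\Ws$ are assumed quasi-isometric when lifted to the universal cover $\tilde M$, the theorem yields that $f$ has Global Product Structure: for every $x,y \in \tilde M$, the intersection $\Ws(x) \cap \Wu(y)$ consists of exactly one point. This is the single bridge between the quasi-isometry hypothesis and the dynamical/topological conclusions we want.

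Next, I would invoke Brin's results from \cite{Brin-nw}. There, Global Product Structure is formulated as the \emph{infinite extension} property, and Brin proves that whenever an Anosov diffeomorphism has this property, two things happen simultaneously: the non-wandering set coincides with the whole manifold, i.e.\ $\Omega(f) = M$, and the universal cover $\tilde M$ is homeomorphic to $\R^n$. Feeding the Global Product Structure obtained in the previous step into Brin's theorem delivers both conclusions of the corollary at once.

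There is no substantive obstacle to overcome here. The only point to check is that the statement of Global Product Structure in Theorem \ref{introanosov} matches Brin's ``infinite extension,'' and this is immediate from comparing the two definitions. The corollary is therefore an observation rather than a theorem demanding new technique; its content is simply that the quasi-isometry hypothesis is strong enough to unlock the structural consequences Brin derived from infinite extension.
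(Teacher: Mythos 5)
Your proposal matches the paper's reasoning exactly: the paper derives this corollary by applying Theorem \ref{introanosov} to obtain Global Product Structure (Brin's ``infinite extension'') and then citing Brin's theorem from \cite{Brin-nw} that this property forces $\Omega(f)=M$ and $\tilde M \cong \bbR^n$. No gaps; this is the intended two-step chaining.
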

One can find hyperbolic toral automorphisms which are not pinched, but as the
invariant foliations are linear, they are quasi-isometric when lifted to the
universal cover.  The work of J.~Franks and A.~Manning shows that every Anosov
diffeomorphism on a torus must have quasi-isometric lifted foliations
\cite{Franks1}\cite{Manning}.  In Section \ref{anosovexamples}, we examine a
six-dimensional nilmanifold supporting Anosov diffeomorphisms, some of which
have quasi-isometric lifted foliations while others do not.

\section{Applications} \label{apps} 

Attempts to classify partially hyperbolic diffeomorphisms have proved
frustrating.  We do not know if examples coming from algebraic maps and
time-one maps of flows, or from perturbations of these diffeomorphisms are
indicative of the general behaviour of partially hyperbolic systems.

One special successful case is for 3-manifolds with nilpotent fundamental
groups.  In this setting, the only manifolds supporting partially hyperbolic
diffeomorphisms are 3-dimensional nilmanifolds, that is, circle bundles over
$\bbT^2$ \cite{Parwani}.  These are the only manifolds where all partially
hyperbolic systems have been classified up to leaf conjugacy, a notion
analogous to topological equivalence for flows
\cite{ham-thesis}\cite{ham-nil}.

These manifolds are also the only known manifolds for which all partially
hyperbolic systems have stable and unstable foliations which are
quasi-isometric when lifted to the universal cover \cite{Parwani}.  While this
could be demonstrated as a consequence of the classification, such reasoning
would be circular.  In fact, the proof of classification starts by
establishing geometric properties of the foliation, including that of
quasi-isometry; using these properties to establish Global Product Structure;
then using the Global Product Structure to construct the leaf conjugacy.
The proof relies on results specific to the 3-dimensional manifolds \cite{BI}.
While there are more difficulties in higher dimensions, the same general
approach might work, in which case, Theorem \ref{introgps} would be a critical
step in the reasoning.

As a simple example of where Theorem \ref{introgps} applies, consider affine
maps of the form
\[
    \Rd \to \Rd, \quad x \mapsto A x + b
\]
where $A \in G L(n, \bbZ)$ and $b \in \Rd$.
If such a map has a partially hyperbolic splitting, the resulting leaves of
the invariant foliations are translates of subspaces of $\Rd$ and so the
foliations are quasi-isometric.

In \cite[Proposition 5]{Brin}, Brin considers a specific example of an
affine map which quotients down to a partially hyperbolic diffeomorphism
$f_0:\bbT^3 \to \bbT^3$.  He shows that every $f$ which is $\epsilon$-$C^1$-close
to $f_0$ is partially hyperbolic and has quasi-isometric invariant foliations.
Further, he gives an actual value for the size of this neighbourhood:
$\epsilon=0.1$.

This technique readily extends to any affine map $g_0:\bbT^d \to \bbT^d$.  Given
the eigenvalues associated to $g_0$, we may calculate a value for $\epsilon$
such that every $g$ $\epsilon$-$C^1$-close has quasi-isometric invariant
foliations.  Then, Theorem \ref{introgps} tells us that every system in this
macroscopic open set around $g_0$ has Global Product Structure.

Suppose further that $g_0$ is a skew-product, that is, its center leaves are
compact.  Take a subset $U \subset \bbT^d$ and a diffeomorphism $h:\bbT^d \to
\bbT^d$ whose support lies in $U$.  If $U$ is disjoint from the orbit of a
periodic center leaf of $g_0$, then $g := h \circ g_0$ will preserve this leaf.  
If $h$ is sufficently $C^1$-close to the identity, then, as above, $W^s$ and
$W^u$ are quasi-isometric, and by Theorem \ref{intronoholo}, every leaf is
compact and the center foliation is a trivial fiber bundle.  Even in cases
where $h$ is large in the $C^1$ norm, we may still be able to establish that
$W^s$ and $W^u$ are quasi-isometric (say, by \cite[Proposition 4]{Brin}).
Then, the same consequences follow.  These techniques apply, with minor
adaptations, to more general skew products on manifolds of the form $\bbT^d
\times N$ where $N$ has an abelian fundamental group.


\section{Splittings} \label{finer} 

We now state and prove several results for invariant splittings, from which
Theorem \ref{introgps} follows as a consequence.

Throughout this section, assume $M$ is a Riemannian manifold (not
necessary compact or connected, but without boundary) and that $f:M \to M$ is a
$C^1$-diffeomorphism.  Also assume that the derivative $Tf$ is uniformly
bounded on $M$, which is always the case when $M$ is compact or when $f$ is
lift of a map on a compact manifold to the universal cover.

\begin{defn}
    A $Tf$-invariant subbundle $E \subset TM$ is \emph{contracting} if there is
    $n>0$ such that
    $\|Tf^n v\| < \tfrac{1}{2}$
    for every unit vector $v \in E$.
    A $Tf$-invariant subbundle $E \subset TM$ is \emph{expanding} if there is
    $n>0$ such that
    $\|Tf^n v\|>2$
    for every unit vector $v \in E$.
\end{defn}
\begin{remark}
    Note that by this definition the zero bundle $0 \subset TM$ is both
    contracting and expanding.  It is the only subbundle to satisfy both
    properties.
\end{remark}
\begin{defn}
    Suppose $E^1,E^2$ are continuous $Tf$-invariant subbundles of $TM$.
    We say that $E^2$ \emph{dominates} $E^1$ if there is $n>0$ such that
    \[
        \|Tf^n u_x\| < \frac{1}{2} \|Tf^n v_x\|
    \]
    for all $x \in M$ and unit vectors $u_x \in E^1_x$, $v_x \in E^2_x$.
    Further, $E^2$ \emph{absolutely dominates} $E^1$ if there is $n>0$ such
    that
    \[
        \|Tf^n u_x\| < \frac{1}{2} \|Tf^n v_y\|
    \]
    for all $x,y \in M$ and unit vectors $u_x \in E^1_x$, $v_y \in E^2_y$
\end{defn}
\begin{remark}
    If $E^2$ dominates $E^1$, it is clear from the definition that
    $E^1 \cap E^2 = 0$.
\end{remark}
\begin{remark}
    Absolute domination is stronger just domination as it compares the effect
    of the derivative at different points.
    An equivalent definition of absolute domination is that there are
    constants $0<\gamma<\mu$ and an integer $n>0$ such that
    \[
        \|Tf^n u_x\| < \gamma < \mu < \|Tf^n v_x\|
    \]
    for all $x \in M$ and unit vectors $u_x \in E^1_x$, $v_x \in E^2_x$.
    From this, one can show that at least one of $E^1$ and $E^2$ is either
    expanding or contracting.
\end{remark}
\begin{defn}
    A \emph{splitting} of a continuous $Tf$-invariant subbundle $E \subset TM$
    is a sum of the form
    \[
        E = E^1 \oplus \cdots \oplus E^n
    \]
    where each $E^i$ is continuous and $Tf$-invariant.  A splitting is
    \emph{(absolutely) dominated} if the subbundles may be ordered such that
    $E^j$ (absolutely) dominates $E^i$ for $1  \le  i < j  \le  n$.
\end{defn}
\begin{remark}
    Extending the previous remark, one sees that in an absolutely dominated
    splitting, there can be at most one subbundle $E^i$ which is neither
    expanding nor contracting.
\end{remark}
\begin{remark}
    To make explicit the order of the subbundles in the dominated splitting,
    we sometimes write
    \[
        E = E^1 \oplus_< E^2 \oplus_< \cdots \oplus_< E^n
    \]
    where $E^i \oplus_< E^j$ signifies that $E^j$ dominates $E^i$.
\end{remark}
With this notation, we can now give succinct definitions for notions
introduced in the previous section. 

\begin{defn}
    The diffeomorphism $f:M \to M$ is \emph{Anosov} if there is a splitting
    $TM = \Es \oplus \Eu$ such that $\Es$ is contracting and $\Eu$ is expanding.
\end{defn}
\begin{defn}
    The diffeomorphism $f:M \to M$ is \emph{partially hyperbolic}
    if there is an absolutely dominated splitting
    $TM = \Es \oplus_< \Ec \oplus_< \Eu$ such that $\Es$ is contracting, $\Eu$ is
    expanding, and $0  \ne  \Ec  \ne  TM$.

    If both $\Es$ and $\Eu$ are non-zero then $f$ is partially hyperbolic in the
    \emph{strong sense}.  Otherwise, it is partially hyperbolic in the
    \emph{weak sense}.
\end{defn}
\begin{remark}
    This definition of partial hyperbolicity is sometimes
    called \emph{absolute} partial hyperbolicity, in contrast to
    \emph{point-wise} partial hyperbolicity, where the dominated splitting
    $TM = \Es \oplus_< \Ec \oplus_< \Eu$
    need not be an absolutely dominated splitting.
    In this more general setting, F.~Rodriguez-Hertz, M.~A.~Rodriguez-Hertz,
    and R.~Ures have announced a non-dynamically coherent example in $\T^3$.
    This example suggests that quasi-isometry is useful only in studying the
    absolute case.  Indeed, all of the proofs in this paper rely on the
    absolute version of the definition.
\end{remark}
\begin{defn}
    A subbundle $E \subset TM$ is \emph{integrable} if there is a foliation
    $\mathcal{F}$ such that $T \mathcal{F}=E$. It is \emph{uniquely integrable}
    if every curve $\gamma:[0,1] \to M$ tangent to $E$ lies in a single leaf of
    $\mathcal{F}$.
\end{defn}
\begin{thm}
    [Brin--Pesin  \cite{BrinPesin}, Hirsch--Pugh--Shub \cite{HPS}]
    \label{uintorig}
    If $f$ is partially hyperbolic, then $\Wu$ and $\Ws$ are
    uniquely integrable.
\end{thm}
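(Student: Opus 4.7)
The plan is to invoke the classical Hadamard--Perron construction via graph transforms. The setup requires two ingredients that we have in hand: the subbundle $\Es$ is contracting, and the complementary subbundle $\Ec \oplus \Eu$ absolutely dominates it. I will describe the argument for $\Ws$; the statement for $\Wu$ follows by applying the same reasoning to $f\inv$, using that $\Eu$ is contracting for $f\inv$ and absolutely dominates $\Es \oplus \Ec$.

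First I would establish local stable manifolds. Fix $x \in M$ and choose an exponential chart at $x$ in which $\Es(x)$ and $\Ec(x) \oplus \Eu(x)$ become complementary coordinate subspaces. In this chart, $f$ is close to its linearization, and the domination of $\Es$ by $\Ec \oplus \Eu$ (after passing to a high enough iterate $f^n$) makes the standard graph transform a contraction on the complete metric space of Lipschitz graphs from a ball in $\Es(x)$ into $\Ec(x)\oplus \Eu(x)$. Its unique fixed point is a $C^1$ disk $\Ws_{\mathrm{loc}}(x)$ tangent to $\Es(x)$ at $x$, depending continuously on $x$; the contracting nature of $\Es$ guarantees that forward iterates of this disk shrink uniformly, which is what makes the fixed point a genuine local stable manifold. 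This is exactly the content of the Brin--Pesin and Hirsch--Pugh--Shub theorems under the present hypotheses.

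Next I would assemble the global foliation $\Ws$ by setting
\[
    \Ws(x) \;=\; \bigcup_{n \ge 0} f^{-n}\bigl(\Ws_{\mathrm{loc}}(f^n x)\bigr).
\]
The $Tf$-invariance of $\Es$ and the uniqueness of the local disks ensure these sets are injectively immersed submanifolds tangent to $\Es$ that partition $M$ into leaves, and similarly for $\Wu$.

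For unique integrability, let $\gamma:[0,1]\to M$ be a $C^1$ curve with $\dot\gamma(t) \in \Es(\gamma(t))$ for every $t$. Since $\Es$ is contracting, there is $n$ such that $\|Tf^n v\| < \tfrac12$ on unit vectors in $\Es$; iterating, $\length(f^{kn}\gamma) \to 0$ uniformly. Pick $k$ large enough that $f^{kn}\gamma$ is contained in a single chart around $f^{kn}\gamma(0)$ where the local stable manifold $\Ws_{\mathrm{loc}}(f^{kn}\gamma(0))$ is defined. Since $f^{kn}\gamma$ is a short curve tangent to $\Es$ passing through $f^{kn}\gamma(0)$, the uniqueness half of the graph transform argument (any invariant graph tangent to $\Es$ with exponentially shrinking forward orbit must coincide with $\Ws_{\mathrm{loc}}$) forces $f^{kn}\gamma \subset \Ws_{\mathrm{loc}}(f^{kn}\gamma(0))$. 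Applying $f^{-kn}$ and using $f$-invariance of $\Ws$, $\gamma \subset \Ws(\gamma(0))$.

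The main obstacle is the graph transform construction, which differs from the classical Anosov setting because the complement of $\Es$ contains the non-hyperbolic direction $\Ec$. This is precisely the place where the absolute domination of $\Es$ by $\Ec \oplus \Eu$ is indispensable: it provides the cone condition that makes the graph transform a contraction despite the lack of expansion in $\Ec$. Everything else is bookkeeping with the uniform contraction of $\Es$.
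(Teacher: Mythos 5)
Your proposal is the standard Hadamard--Perron/graph-transform argument, which is exactly the proof the paper has in mind: Theorem \ref{uintorig} is quoted from Brin--Pesin and Hirsch--Pugh--Shub, and the paper's only comment on its proof is that the foliations are constructed by a graph transformation argument. Your sketch (local stable disks as fixed points of the graph transform, globalization by pulling back along the orbit, and unique integrability by iterating an $\Es$-tangent curve forward until it is short and invoking the uniqueness/fast-convergence characterization of the local stable manifold) follows that same route, so there is nothing to add beyond noting that the last step should be phrased via the rate-of-convergence characterization of $\Ws_{\mathrm{loc}}$ rather than literally as uniqueness of an ``invariant graph,'' since the curve itself is not invariant.
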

To prove Theorem \ref{uintorig}, one constructs the foliations by a graph
transformation argument.  The same argument applies when restricted to a leaf
of an invariant foliation, and so can be generalized.
    
\begin{prop} \label{uint}
    If $f$ has a $Tf$-invariant, uniquely integrable subbundle
    $E \subset TM$ with dominated splitting $E = \Ec \oplus_< \Eu$ and $\Eu$ is
    expanding, then $\Eu$ is uniquely integrable.
\end{prop}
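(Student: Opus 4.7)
The plan is to adapt the Hirsch--Pugh--Shub graph transform argument invoked for Theorem \ref{uintorig} and apply it leaf by leaf inside the foliation $\F$ tangent to $E$ given by the unique integrability hypothesis. Since $E$ is $Tf$-invariant and uniquely integrable, $f$ permutes the leaves of $\F$: for each leaf $L$, the restriction $f|_L: L \to f(L)$ is a smooth diffeomorphism of immersed submanifolds. On each such $L$, the splitting $TL = \Ec|_L \oplus_< \Eu|_L$ is again a dominated splitting for $f|_L$ with $\Eu|_L$ expanding, and the integer $n$ and constants coming from the global hypotheses are inherited leafwise, hence uniform across all leaves.

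Fix $x \in L$ and work in a chart of $L$ around $x$ in which $\Eu(x)$ and $\Ec(x)$ are nearly coordinate subspaces. Consider Lipschitz graphs over a small disk in $\Eu(x)$ taking values in $\Ec(x)$, and define the graph transform by pulling back under a suitable iterate of $f|_L$. The combination ``$\Eu$ dominates $\Ec$ and $\Eu$ is expanding'' is precisely what makes this graph transform a contraction in the appropriate Lipschitz norm, yielding a unique invariant graph $\Wu_{\operatorname{loc}}(x) \subset L$, tangent to $\Eu$ at $x$. The uniform bound on $Tf$ together with the leaf-independent dominance and expansion constants let one take the radius of $\Wu_{\operatorname{loc}}(x)$ uniform in $x$, and the uniqueness of the graph-transform fixed point forces nearby local plaques to agree on overlaps. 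Saturating under forward iterates
\[
    \Wu(x) := \bigcup_{n \ge 0} f^n\bigl(\Wu_{\operatorname{loc}}(f^{-n} x)\bigr)
\]
assembles these plaques into a foliation $\Wu$ of $M$ whose leaves lie inside the leaves of $\F$ and are tangent to $\Eu$, proving integrability.

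For unique integrability, let $\gamma:[0,1] \to M$ be a $C^1$ curve tangent to $\Eu$. Since $\Eu \subset E$ and $E$ is uniquely integrable, $\gamma$ is entirely contained in a single leaf $L$ of $\F$. The uniform expansion of $\Eu|_L$ under $f|_L$ is equivalent to uniform contraction of $\Eu$-vectors under $(f|_L)\inv$, so $\length(f^{-n} \circ \gamma) \to 0$ exponentially. For $n$ large the entire curve $f^{-n} \circ \gamma$ fits inside a single local plaque $\Wu_{\operatorname{loc}}$, and hence in one $\Wu$-leaf; applying $f^n$ shows $\gamma$ lies in a single $\Wu$-leaf. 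The main obstacle is checking that the Hirsch--Pugh--Shub construction, normally stated for a compact ambient manifold, still goes through uniformly on the possibly non-compact leaves of $\F$. This is resolved by the uniform boundedness of $Tf$ on $M$ together with the fact that domination and expansion were assumed with uniform constants over all of $M$, so every quantity entering the contraction estimate for the graph transform is leaf-independent.
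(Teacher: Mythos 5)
Your proposal is essentially the paper's own argument: the paper proves Proposition \ref{uint} simply by remarking that the Brin--Pesin/Hirsch--Pugh--Shub graph transform construction behind Theorem \ref{uintorig} applies when restricted to the leaves of the foliation tangent to $E$, with uniformity over the (possibly non-compact) leaves coming from the standing assumption that $Tf$ is uniformly bounded and from the uniform domination and expansion constants. Your leafwise graph transform with uniform plaque size, followed by the standard backward-iteration argument (where the step ``a short curve tangent to $\Eu$ lies in a single local plaque'' is justified by the cone/graph-uniqueness characterization of the plaques) is exactly this route, spelled out in more detail than the paper gives.
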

Let $d=d_M$ denote distance measured on the manifold $M$.

\begin{defn}
    A foliation $W$ on $M$ is \emph{quasi-isometric} if there is $Q>1$ such
    that $d_W(x,y)  \le  Q\,d(x,y)$ for any $x,y \in M$ which lie on the same
    leaf of $W$.
\end{defn}
The standard definition of quasi-isometry would use an inequality of the form
\[
    d_W(x,y)  \le  Q\,d(x,y) + Q.
\]
In this paper, we only consider foliations with $C^1$-leaves which are tangent
to a uniformly continuous subbundle $E \subset M$.  In this case, one can show
that as $d(x,y) \to 0$, the ratio between $d_W(x,y)$ and $d(x,y)$ tends
uniformly to one.  Therefore (up to a change of the value $Q$) the two
definitions are equivalent.

As stated in the introduction, quasi-isometry implies dynamical coherence.

\begin{thm}
    [Brin \cite{Brin}] \label{brinintorig}
    If $f$ is partially hyperbolic, and $\Wu$ and $\Ws$ are
    quasi-isometric when lifted to the universal cover, then
    $\Ec$, $\Ecs$, and $\Ecu$ are uniquely
    integrable.
\end{thm}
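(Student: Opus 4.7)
The plan is to follow Brin's strategy from \cite{Brin}: establish unique integrability of $\Ecs$ and (symmetrically) $\Ecu$ by ruling out branching of local center-stable plaques using the quasi-isometry of $\Wu$, then recover $\Ec$ as the tangent bundle of the intersection $\Wcs\cap\Wcu$. Once $\Wcs$ and $\Wcu$ are in hand, they are transverse (since $\Ecs+\Ecu=TM$ and $\Ecs\cap\Ecu=\Ec$), so locally $\Wcs(x)\cap\Wcu(x)$ is a submanifold tangent to $\Ec$; any curve tangent to $\Ec$ lies in both $\Wcs$ and $\Wcu$ by their unique integrability, hence in the intersection, and $\Ec$ inherits unique integrability for free.

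The construction of $\Wcs$ starts from the Hirsch--Pugh--Shub plaque family theorem, which produces a family of local $C^1$-disks $\{P^{cs}(x)\}$ of uniform radius, each tangent to $\Ecs(x)$ and locally $f$-invariant. Lifting to $\tilde M$, the natural candidate for a global $\Wcs$-leaf through $\tilde x$ is $\bigcup_{n\ge 0}\tilde f^{-n}(P^{cs}(\tilde f^n\tilde x))$, an immersed $C^1$-submanifold tangent to $\Ecs$. Unique integrability of $\Ecs$ is then equivalent to ruling out branching: there cannot exist two local $cs$-plaques $S_1,S_2\subset\tilde M$ meeting at a common point $\tilde x$ but disagreeing in every neighborhood of $\tilde x$.

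Suppose such a branching pair $(S_1,S_2)$ exists. Pick $\tilde y\in S_1\setminus S_2$ close to $\tilde x$; because $S_2$ is tangent to $\Ecs$ and transverse to $\Eu$, the local $\Wu$-plaque through $\tilde y$ meets $S_2$ at a single nearby point $\tilde z\neq\tilde y$. Join $\tilde x$ to $\tilde y$ by a curve in $S_1$ and $\tilde x$ to $\tilde z$ by a curve in $S_2$; both have tangent vectors in $\Ecs$ with bounded norm, so the absolute upper bound on $\Ecs$ gives $d_{\tilde M}(\tilde f^n\tilde y,\tilde f^n\tilde z)\le C\gamma^n$ for a uniform constant $C$. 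On the other hand, $\tilde y$ and $\tilde z$ lie on a common $\Wu$-leaf, and the absolute lower bound on unstable expansion gives $d_{\Wu}(\tilde f^n\tilde y,\tilde f^n\tilde z)\ge \mu^n\,d_{\Wu}(\tilde y,\tilde z)$. The quasi-isometry inequality $d_{\Wu}\le Q\,d_{\tilde M}+Q$ then forces $\mu^n\,d_{\Wu}(\tilde y,\tilde z)\le QC\gamma^n+Q$; dividing by $\mu^n$ and letting $n\to\infty$ sends the left side to a positive constant and the right to zero, since $\gamma<\mu$, a contradiction. Hence $\Ecs$ is uniquely integrable, and $\Ecu$ follows symmetrically via quasi-isometry of $\Ws$.

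The main obstacle is the uniformity of the bound $d_{\tilde M}(\tilde f^n\tilde y,\tilde f^n\tilde z)\le C\gamma^n$: one must verify that the iterated $cs$-curves emanating from $\tilde x$ remain within a region where the absolute upper bound genuinely applies, and that the length constant $C$ is independent of $\tilde x\in\tilde M$. This is precisely where \emph{absolute} rather than pointwise domination is essential---the constants $\gamma<\mu$ are uniform across $M$, so iterates of any $\Ecs$-tangent curve are dilated by at most $\gamma^n$ regardless of starting point, in line with the remark above explaining why this argument cannot extend to the pointwise setting (consistent with the Rodriguez-Hertz--Ures counterexample).
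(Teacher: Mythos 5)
The paper does not prove this statement at all---it is quoted as Brin's theorem and used as a black box---so your proposal is being measured against Brin's argument rather than anything in the text. Your central estimate is indeed the heart of that argument and is correct: if $\tilde y,\tilde z$ are joined by curves tangent to $E^{cs}$ of bounded length, then $d_{\tilde M}(\tilde f^n\tilde y,\tilde f^n\tilde z)\le C\gamma^n$, while if they also lie on one unstable leaf, quasi-isometry gives $\mu^n d_u(\tilde y,\tilde z)\le Q\,d_{\tilde M}(\tilde f^n\tilde y,\tilde f^n\tilde z)+Q$, and $\gamma<\mu$ (here absolute domination is exactly what makes $C,\gamma,\mu$ uniform) yields the contradiction. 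So the ``no unstable leaf meets an $E^{cs}$-saturated set twice'' lemma is fine.

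The gaps are in how you convert this lemma into unique integrability. First, the Hirsch--Pugh--Shub plaque family gives disks $P^{cs}(x)$ tangent to $E^{cs}(x)$ \emph{only at the base point} $x$; they are not integral manifolds of $E^{cs}$. Consequently the curves you draw inside $S_1$ and $S_2$ need not have tangents in $E^{cs}$, and the bound $d_{\tilde M}(\tilde f^n\tilde y,\tilde f^n\tilde z)\le C\gamma^n$ is not justified as written; you must either assume $S_1,S_2$ are genuine integral manifolds (which presupposes existence) or rerun the estimate with a backward-invariant cone field around $E^{cs}$ and a constant $\gamma'<\mu$. Second, and more seriously, the existence half of the statement is asserted rather than proved: declaring that $\bigcup_{n\ge 0}\tilde f^{-n}\bigl(P^{cs}(\tilde f^n\tilde x)\bigr)$ is an immersed submanifold tangent to $E^{cs}$, and that ``unique integrability is equivalent to ruling out branching of plaques,'' begs the question. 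A continuous subbundle such as $E^{cs}$ need not admit any integral manifold, and without the quasi-isometry hypothesis there are non--dynamically coherent partially hyperbolic examples, so integrability cannot be taken as the ``natural candidate'' and only uniqueness argued. Brin's proof does the missing work precisely here: he considers the accessibility classes of $E^{cs}$ (or limits of iterated disks), uses the key estimate to show each class meets each unstable leaf at most once, and then uses transversality to $W^u$ to upgrade these classes to leaves of a foliation tangent to $E^{cs}$; uniqueness then follows by applying the same estimate directly to an arbitrary $E^{cs}$-tangent curve. Your reduction of $E^c$ to the intersection of $W^{cs}$ and $W^{cu}$ is fine once those two foliations are in hand.
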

Note that Theorems \ref{uintorig} and \ref{brinintorig} hold for partial
hyperbolicity in the weak or strong sense.  If $\Es$ or $\Eu$ is equal to the
zero bundle, it is uniquely integrable and the tangent foliation, where each
leaf consists of a single point, is trivially quasi-isometric.
Theorem \ref{brinintorig} can be formulated with respect to an invariant
foliation.

\begin{prop} \label{brinint}
    If $f$ has a $Tf$-invariant, uniquely integrable subbundle
    $E \subset TM$ with absolutely dominated splitting $E = \Ec \oplus_< \Eu$ and
    $\Eu$ is expanding and tangent to a quasi-isometric foliation, then $\Ec$ is
    uniquely integrable.
\end{prop}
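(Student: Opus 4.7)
The strategy is to reduce the statement to Brin's theorem (Theorem \ref{brinintorig}) applied inside a single leaf of the foliation tangent to $E$. Let $\mathcal{F}$ denote that foliation. Since $E$ is $Tf$-invariant and uniquely integrable, $f$ permutes the leaves of $\mathcal{F}$ and restricts to a $C^1$-diffeomorphism between them. I would endow each leaf $L$ with its induced Riemannian metric $d_L$; the splitting $TL = \Ec|_L \oplus_< \Eu|_L$ inherits absolute domination, $\Eu|_L$ inherits expansion, and the uniform bound on $Tf$ descends to a uniform bound for $f|_L$. Thus $f|_L$ fits the standing framework of this section, playing the role of a weakly partially hyperbolic system with trivial stable bundle.

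The second step is to observe that $\Wu|_L$ is quasi-isometric on $L$ with the same constant $Q$. Unique integrability of $E$ forces each $\Wu$-leaf to lie inside a single $\mathcal{F}$-leaf, so for $x,y$ on a common $\Wu$-leaf one has $d_{\Wu}(x,y) \le Q\,d(x,y) \le Q\,d_L(x,y)$ using $d = d_M \le d_L$. Hence the hypotheses of Brin's theorem hold on $L$: a weakly partially hyperbolic diffeomorphism with quasi-isometric expanding foliation and uniformly bounded derivative.

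Brin's proof constructs local $\Wc$-disks at a point by applying a graph transform to images of small disks tangent to $\Ec$ under backwards iterates of $f$; the quasi-isometry of $\Wu$ controls how the $\Eu$-direction spreads out, while the uniform bound on $Tf$ supplies the needed compactness of the space of admissible graphs. I would verify that in this argument the only role of the compactness of the ambient manifold is to supply the uniform $Tf$-bound, which we already have, so the proof runs verbatim on each $L$. This yields unique integrability of $\Ec|_L$ within $L$. Since any $C^1$-curve tangent to $\Ec$ in $M$ is automatically tangent to $E$ and therefore lies in a single $\mathcal{F}$-leaf, leafwise unique integrability promotes directly to unique integrability of $\Ec$ on $M$, and the leafwise foliations assemble into a continuous foliation of $M$ by the continuous transverse dependence of the graph-transform construction.

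The main obstacle I anticipate is confirming that Brin's proof really requires only the uniform $Tf$-bound and the quasi-isometry of $\Wu$ and does not secretly use compactness of the ambient manifold elsewhere (for instance, to extract convergent subsequences of disks near a base point). On a non-compact leaf $L$, the relevant compactness is only local, provided by the smooth manifold structure of $L$ together with uniform $C^1$-bounds on the iterates of the disks, so Arzel\`a--Ascoli should still deliver the limiting local $\Wc$-disk. A secondary concern is that the induced metric $d_L$ need not be complete; but since the construction is local in $L$, completeness is not required.
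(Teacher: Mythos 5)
Your proposal is correct and follows essentially the same route as the paper, which justifies this proposition precisely by observing that Brin's graph-transform argument needs only a uniformly bounded derivative and the quasi-isometry of $\Wu$, so it applies when restricted to the leaves of the invariant foliation tangent to $E$. The one refinement worth making is that, since $f$ need not preserve an individual leaf $L$, the argument should be run not on a single leaf but on the disjoint union $\Lspace$ of all leaves, equipped with the induced leafwise metric and the induced diffeomorphism $g$ satisfying $i \circ g = f \circ i$ (exactly the device the paper uses in the proof of Theorem \ref{qigps}), which formalizes your leafwise reduction without changing its substance.
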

We now build up the knowledge needed to prove Theorem \ref{introgps}, starting
with a technical lemma.

\begin{lemma} \label{tribound}
    Suppose $f$ has a continuous, $Tf$-invariant subbundle $E \subset TM$ with
    absolutely dominated splitting $E = \Ec \oplus_< \Eu$
    and that $\Ec$ and $\Eu$ are tangent to quasi-isometric foliations $\Wc$ and
    $\Wu$.
    Then, there is $c>0$ such that
    \[
            \max\{d(x,y),d(x,z)\}  \le  c\,d(y,z)
    \]
    for all $x \in M$, $y \in \Wu(x)$, and $z \in \Wc(x)$.
\end{lemma}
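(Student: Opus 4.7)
The plan is to combine a direct triangle-inequality observation with a single forward iteration by $f$, exploiting the absolutely dominated splitting. The argument splits on the ratio of the two legs of the triangle.

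First, if $d(x,y) \le \tfrac{1}{2} d(x,z)$, then the ordinary triangle inequality $d(x,z) \le d(x,y) + d(y,z)$ rearranges to $d(x,z) \le 2 d(y,z)$, and then $d(x,y) \le d(x,z)/2 \le d(y,z)$ as well; so the conclusion holds with $c = 2$ in this subcase, using no dynamics at all. It suffices to treat the complementary case $d(x,z) < 2 d(x,y)$, in which the two legs are already comparable.

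In that regime, fix the integer $n$ from the absolutely dominated splitting, so $\|Tf^n u\| < \gamma$ and $\|Tf^n v\| > \mu$ for unit $u \in \Ec$ and unit $v \in \Eu$. Iterating, $\|Tf^{kn}|_{\Ec}\| \le \gamma^k$ while $\|Tf^{kn} v\| \ge \mu^k \|v\|$ for $v \in \Eu$. Pushing (resp.~pulling back) shortest leaf-curves and invoking quasi-isometry of $\Wu$ and $\Wc$ yields
\[
    d(f^{kn} x, f^{kn} y) \ge Q^{-1} \mu^k d(x,y)
    \qquad\text{and}\qquad
    d(f^{kn} x, f^{kn} z) \le Q \gamma^k d(x,z).
\]
Choose $k$ once and for all so that $(\mu/\gamma)^k \ge 4 Q^2$; this choice depends only on the global constants $Q, \gamma, \mu$, not on the triangle. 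Combined with $d(x,z) < 2 d(x,y)$, the above inequalities force $d(f^{kn} x, f^{kn} z) \le \tfrac{1}{2} d(f^{kn} x, f^{kn} y)$, and the triangle inequality then gives $d(f^{kn} y, f^{kn} z) \ge \tfrac{1}{2} d(f^{kn} x, f^{kn} y) \ge (\mu^k/2Q) d(x,y)$. On the other hand, the hypothesis that $Tf$ is uniformly bounded (say $\|Tf\| \le K$) yields $d(f^{kn} y, f^{kn} z) \le K^{kn} d(y,z)$. Combining, $d(x,y) \le 2 Q K^{kn} \mu^{-k} d(y,z)$, and hence $d(x,z) < 2 d(x,y) \le 4 Q K^{kn} \mu^{-k} d(y,z)$. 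Setting $c = \max\{\,2,\; 4 Q K^{kn} \mu^{-k}\,\}$ closes both cases.

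The main conceptual hurdle is the asymmetry of the hypothesis: $\Wu$ comes with a uniform expansion rate $\mu$, but $\Ec$ carries only an upper bound $\gamma$ on its expansion with no matching lower bound, so the naive symmetric approach of iterating backward in order to shrink the whole triangle into a local transversality estimate fails (the $\Wc$-leg can grow uncontrollably under $f^{-1}$). The case split circumvents exactly this issue: when $d(x,z)$ already dominates $d(x,y)$, the triangle inequality alone is enough; only in the comparable regime does one use the dominated splitting to force $\Wu$ to dominate after finitely many forward iterates.
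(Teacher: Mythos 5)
Your proof is correct and follows essentially the same route as the paper's: the identical case split on whether $d(x,z) < 2\,d(x,y)$, the same forward-iteration estimates combining quasi-isometry of $\Wc$ and $\Wu$ with the absolutely dominated splitting, and the same use of the uniform bound on $\|Tf\|$ to compare $d(f^{N}y,f^{N}z)$ with $d(y,z)$. The only difference is cosmetic bookkeeping in how the final inequalities are assembled, so nothing further is needed.
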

\begin{remark}
    Since the foliations are quasi-isometric, we may, with at most a change in
    the constant $c$, conclude further that
    \[
        \max\{d_u(x,y),d_c(x,z)\}  \le  c\,d(y,z)
    \]
    where $d_u$ and $d_c$ are the distances measured along the foliations.
\end{remark}
\begin{remark}
    In this lemma, we need not assume that $E$ is integrable.
\end{remark}
\begin{proof}
    Without loss of generality, by replacing $f$ by an iterate $f^n$,
    there are constants $\gamma < \mu$ such that
    $\|Tf v\| \le \gamma \|v\|$ for $v \in \Ec$, and
    $\mu \|v\|  \le  \|Tf v\|$ for $v \in \Eu$.
    By standing assumption, the derivative $Tf$ is bounded; that is, there is
    $\lambda>0$ such that
    $\|Tf v\|  \le  λ \|v\|$ for all $v \in TM$.

    Let $x,y,z \in M$ be as in the hypothesis.

    \noindent {\bf Case One:} Suppose $2\,d(x,y)  \le  d(x,z)$.
    Then, using the triangle inequality,
    \[
        d(x,z)
        = 2d(x,z) - d(x,z)
         \le  2d(x,z) - 2d(x,y)
         \le  2d(y,z).
    \]
    Thus, the theorem holds in this case with the caveat that $c \ge 2$.

    \noindent {\bf Case Two:} Suppose $d(x,z) < 2\,d(x,y)$.
    Let $Q>1$ be a constant of quasi-isometry satisfied by both foliations.
    Then
    \[
        d(f^n(x),f^n(z))
         \le  d_c(f^n(x),f^n(z))
         \le  \gamma^n d_c(x,z)
         \le  Q \gamma^n d(x,z)
    \]
    and
    \[
        Q d(f^n(x),f^n(y))
         \ge   d_u(f^n(x), f^n(y))
         \ge   \mu^n d_u(x,y)
         \ge   \mu^n d(x,y).
    \]
    These estimates combine to give
    \begin{align*}
        d(f^n(x),f^n(y)) & \le  d(f^n(x),f^n(z)) + d(f^n(y),f^n(z))  \quad \Rightarrow \quad  \\
        Q \inv \mu^n d(x,y) & \le  Q \gamma^n d(x,z) + \lambda^n d(y,z)   \quad \Rightarrow \quad  \\
        Q \inv \mu^n d(x,y) & \le  2 Q \gamma^n d(x,y) + \lambda^n d(y,z)  \quad \Rightarrow \quad  \\
        (Q \inv \mu^n - 2 Q \gamma^n) d(x,y) & \le  \lambda^n d(y,z).
    \end{align*}
    Fixing $n$ large enough that $Q \inv \mu^n - 2 Q \gamma^n > 0$,
    the last line above simplifies to $d(x,y)  \le  c_0 d(y,z)$ for an
    appropriate positive constant $c_0$ independent of $x$, $y$, and $z$.  As
    we are considering the case $d(x,z) < 2\,d(x,y)$, it follows that
    \[
        \max\{d(x,y),d(x,z)\}  \le  2\,c_0\,d(y,z).
    \]
    Taking $c = \max\{2,2c_0\}$ establishes the desired inequality for
    Cases One and Two and concludes the proof.
\end{proof}
\begin{thm} \label{qigps}
    Suppose $f$ has a continuous, $Tf$-invariant subbundle $E \subset TM$ which
    is uniquely integrable with tangent foliation $W$.  Suppose further that
    $E$ has an absolutely dominated splitting
    $E = \Ec \oplus_< \Eu$
    and $\Wc$ and $\Wu$ are quasi-isometric foliations tangent to $\Ec$ and $\Eu$.
    Then, for any two points $x$ and $y$ on a leaf of $W$, the intersection
    $\Wc(x) \cap \Wu(y)$ consists of exactly one point.
\end{thm}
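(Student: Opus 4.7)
The plan is to fix $x$ and show that
\[
    S = \{ y \in W(x) : \Wc(x) \cap \Wu(y) \text{ consists of a single point} \}
\]
is nonempty, open, and closed in the connected leaf $W(x)$, whence $S = W(x)$. Nonemptiness is trivial, since $x \in S$, so the task reduces to verifying openness, closedness, and global uniqueness of the intersection at each $y$.

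Openness follows from the local product structure of $\Wc$ and $\Wu$ as transverse subfoliations of $W$ (the subbundles $\Ec, \Eu$ are continuous transverse summands of $E$, and each is integrable, so the subfoliation yields a local product chart at each point of each $W$-leaf). Concretely, given $y \in S$ with intersection point $z$, every $y'$ in $W(x)$ sufficiently close to $y$ has its $\Wu$-leaf meeting $\Wc(x)$ at a point close to $z$, by transporting along the $\Wu$-leaf from $y$ to $z$ and then applying local product at $z$. Uniqueness at any $y$ comes from iterating $f$ forward: if $z_1 \neq z_2$ both lay in $\Wc(x) \cap \Wu(y)$, then quasi-isometry of $\Wu$ combined with $\Eu$-expansion gives
\[
    Q^{-1}\mu^n\, d_u(z_1, z_2) \le d(f^n z_1, f^n z_2),
\]
while absolute domination of $\Ec$ by $\Eu$ combined with quasi-isometry of $\Wc$ gives
\[
    d(f^n z_1, f^n z_2) \le Q\gamma^n\, d_c(z_1, z_2),
\]
and since $\mu > \gamma$, letting $n$ grow forces $d_u(z_1, z_2) = 0$, a contradiction.

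The main obstacle is closedness, and Lemma \ref{tribound} is exactly the geometric input that makes it go through. Suppose $y_n \in S$ with $y_n \to y \in W(x)$, and let $z_n$ be the unique intersection point of $\Wc(x)$ and $\Wu(y_n)$. Applying Lemma \ref{tribound} at base point $z_n$ (noting $x \in \Wc(z_n)$ and $y_n \in \Wu(z_n)$) bounds $d(x, z_n)$ and $d(y_n, z_n)$ by $c\, d(x, y_n)$, hence uniformly in $n$; the remark following the lemma upgrades these to uniform bounds on the intrinsic distances $d_c(x, z_n)$ and $d_u(y_n, z_n)$. Completeness of the leaves in their induced Riemannian metrics then yields a subsequence $z_n$ converging in $\Wc(x)$ to a point $z$, and continuity of the foliation $\Wu$ together with the uniform bound on $d_u(y_n, z_n)$ and the convergence $y_n \to y$ places $z \in \Wu(y)$. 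Thus $\Wc(x) \cap \Wu(y)$ is nonempty, and uniqueness promotes $y$ to $S$. The entire delicacy of the theorem concentrates here: without Lemma \ref{tribound} there is no mechanism preventing the intersection points $z_n$ from drifting off to infinity as $y_n \to y$, and the closure argument would collapse; supplying this quantitative geometric control is the heart of the proof.
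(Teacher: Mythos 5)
Your argument is correct in substance but organized quite differently from the paper's. The paper first reduces to the case $E=TM$ by replacing $M$ with the disjoint union $\Lspace$ of the leaves of $W$ (so that leaves become connected components), proves uniqueness by applying Lemma \ref{tribound} with $y=z$, and proves existence by fixing a leaf $L$ of $\Wc$, forming the saturated sets $A_T=\bigcup_{x\in L}\{y\in\Wu(x):d_u(x,y)\le T\}$, and using Lemma \ref{tribound} to show $\dist(\partial A_T,L)\ge T/c$, so that a path-crossing argument forces every point of the component into some $A_T$. You instead run a connectedness (open--closed) argument on the set $S\subset W(x)$, working intrinsically in the leaf topology of $W(x)$; this replaces the $\Lspace$ device and the $A_T$ analysis, at the cost of having to verify openness (via local product structure and $\Wu$-holonomy inside $W(x)$) --- a step with no direct counterpart in the paper, though the paper's claim that points with $d_u$-distance $<T$ from $L$ are interior to $A_T$ is of exactly the same nature. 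Your uniqueness estimate is a re-derivation of Lemma \ref{tribound} in the degenerate case $y=z$, and your use of the lemma at base point $z_n$ to bound $d_c(x,z_n)$ and $d_u(y_n,z_n)$ is precisely the quantitative input that plays the role of the paper's $\dist(\partial A_T,L)\ge T/c$ estimate: in both proofs it is what prevents intersection points from escaping to infinity.

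One step deserves more care than you give it: in the closedness argument, the inference ``$z_n\in\Wu(y_n)$, $z_n\to z$, $y_n\to y$, hence $z\in\Wu(y)$'' is not bare continuity --- leaves of a foliation are in general not closed subsets, and without the length bound this implication is false. What saves it is exactly the uniform bound $d_u(y_n,z_n)\le R$ you extracted from Lemma \ref{tribound}: the connecting $u$-paths have bounded length and stay in a compact region, so a plaque-chain (or holonomy) argument along these paths gives the conclusion; alternatively, one can skip extracting the limit $z$ altogether and, for large $n$, apply the $\Wu$-holonomy along the path from $y_n$ to $z_n$ to the nearby point $y$, landing on the $\Wc$-plaque at $z_n$, which lies in $\Wc(x)$. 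This is comparable in depth to the paper's own appeal to ``continuity of the foliation'' in establishing $\partial A_T\subset B_T$, so it is a matter of filling in a standard argument rather than a flaw in the approach; similarly, the completeness of leaves and the Hopf--Rinow-type compactness of bounded leaf balls that you invoke are implicit in the paper's proof as well (e.g.\ in the compactness of $L$ intersected with a large ball).
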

\begin{proof}
    First, consider the case where $E=TM$ and the leaves of $W$ are the
    connected components of $M$.

    To prove uniqueness, assume $x$ and $y$ lie on the same $\Wc$ leaf and the
    same $\Wu$ leaf.  Lemma \ref{tribound} with $y=z$ states that
    \[
        \max \{ d(x,y), d(x,y) \}  \le  c\,d(y,y) = 0,
    \]
    which is a convoluted way of saying $x=y$.

    To establish existence of the intersection, first fix a leaf $L$ of $\Wc$.
    Define, for $T>0$,
    \[    
        A_T = \bigcup_{x \in L} A^u_T(x) \quad \text{where} \quad
        A_T^u(x) = \{ y \in \Wu(x) : d_u(x,y)  \le  T \}
    \]
    and
    \[
        B_T = \bigcup_{x \in L} B^u_T(x) \quad \text{where} \quad
        B_T^u(x) = \{ y \in \Wu(x) : d_u(x,y) = T \}.
    \]
    As the leaf $L$ is transverse to $\Wu$, one can show by continuity of the
    foliation that the $d_u$-distance from a point $y \in A_T$ to $L$ depends
    continuously on $y$.  Therefore,
    if $y \in A^u_T(x)$ and $d_u(x,y)<T$ then $y$ is in the
    interior of $A_T$, and consequently $\partial A_T \subset B_T$.

    For some value $T>0$, suppose $x \in L$ and $y \in B^u_T(x)$.  Note that the
    leaf $L$ is properly embedded in $M$, for if it accumulated on a point
    $p$, then $\Wu(p)$ would intersect $L$ more than once.
    Then, the intersection of $L$ with a large closed ball centered at $y$ is
    a compact set.  The function $z \mapsto d(y,z)$ defined on this compact set
    achieves its minimum at a point $z \in L$ such that
    $\dist(y, L) = d(y, z)$.  The points $x,y,z$ satisfy the conditions of
    Lemma \ref{tribound} and there is a constant $c>0$ (independent of $T$)
    such that $d_u(x,y)  \le  c\,d(y,z)$.  Then,
    \[
        \dist(y, L) = d(y,z)  \ge  \frac{1}{c} d_u(x,y) = \frac{T}{c}.
    \]
    As this holds for all $y \in \partial A_T \subset B_T$, it follows that
    $\dist(\partial A_T, L)  \ge  \tfrac{T}{C}$.
    Any $y$ in the same component as $x$ is a finite distance away from $L$.
    Then, $y$ is in $A_T$ for sufficiently large $T$ and hence $\Wu(y)$
    intersects $L$.

    In the case where $E  \ne  TM$, consider the
    manifold $\Lspace$ defined as the disjoint union of the leaves of the
    foliation $W$.  There is
    a canonical bijective map $i:\Lspace \to M$, so we may define a Riemannian
    metric on $\Lspace$ by declaring $i$ on each leaf to be an isometry to its
    image.
    Further, there is a unique diffeomorphism $g:\Lspace \to \Lspace$ such that
    $i \circ g=f \circ i$.  The proof reduces to the previous case by
    considering $g$, $\Lspace$, and $E = T \Lspace$ in place of $f$, $M$, and
    $E = TM$.
\end{proof}
\begin{thm} \label{nsum}
    Suppose $f$ has a continuous, $Tf$-invariant, uniquely integrable
    subbundle $E \subset TM$ with absolutely dominated splitting
    \[
        E = E^1 \oplus_< E^2 \oplus_< \cdots \oplus_< E^n
    \]
    such that each $E^i$ is tangent to a quasi-isometric foliation.
    Then each subbundle of the form 
    $E^j \oplus \cdots \oplus E^k$
    for $1  \le  j  \le  k  \le  n$
    is uniquely integrable and the tangent foliation is quasi-isometric.
\end{thm}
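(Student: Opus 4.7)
The plan is to proceed by induction on $n$, the number of bundles in the splitting. The base case $n=1$ is trivial, as the only sub-sum is $E^1$ and it is given to be uniquely integrable and tangent to a quasi-isometric foliation. So assume the theorem for splittings of length less than $n$, and consider an $n$-way splitting as in the hypothesis.

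The inductive step breaks into two tasks: establishing unique integrability of the ``head'' $F := E^1 \oplus \cdots \oplus E^{n-1}$ and the ``tail'' $G := E^2 \oplus \cdots \oplus E^n$, and then treating the full bundle $E$ itself. For $F$, consider the two-way absolutely dominated splitting $E = F \oplus_< E^n$. If $E^n$ is expanding, Proposition \ref{brinint} applies directly (the top $E^n$ is tangent to a quasi-isometric foliation by hypothesis) and gives unique integrability of $F$. If $E^n$ is not expanding, the remark that at most one bundle in an absolutely dominated splitting is neither expanding nor contracting forces every $E^i$ with $i \le n-1$ to be contracting; hence $F$ itself is contracting. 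Then under $f^{-1}$ the splitting reverses to $E = E^n \oplus_< F$ with $F$ now at the top and expanding, so Proposition \ref{uint} applied to $f^{-1}$ yields unique integrability of $F$. A symmetric case-analysis based on whether $E^1$ is contracting for $f$ handles $G$.

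With $F$ and $G$ both uniquely integrable, apply the inductive hypothesis to the length-$(n-1)$ splittings $F = E^1 \oplus_< \cdots \oplus_< E^{n-1}$ and $G = E^2 \oplus_< \cdots \oplus_< E^n$. This delivers unique integrability and quasi-isometry for every sub-sum $E^j \oplus \cdots \oplus E^k$ with $(j,k) \ne (1,n)$, since each such sub-sum sits inside $F$ or inside $G$. The only remaining case is $E$ itself, which is uniquely integrable by hypothesis. To show the tangent foliation $W$ is quasi-isometric, take $x, y$ on a common leaf of $W$. Theorem \ref{qigps} applied to the two-way splitting $E = F \oplus_< E^n$ (both parts now quasi-isometric) produces a unique intersection point $z$ of the $F$-leaf through $x$ and the $E^n$-leaf through $y$. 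Lemma \ref{tribound} then bounds $\max\{d(x,z), d(y,z)\} \le c\, d(x,y)$, and concatenating leaf distances yields $d_W(x,y) \le Q\bigl(d(x,z) + d(z,y)\bigr) \le 2Qc\, d(x,y)$.

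The main obstacle is the integrability step in the case when the top bundle $E^n$ fails to be expanding: Proposition \ref{brinint} does not apply directly, and the switch to $f^{-1}$ requires verifying that $F$ is genuinely contracting. This is precisely where the structural fact about absolutely dominated splittings --- that at most one sub-bundle can be neither expanding nor contracting --- becomes essential, and where the absolute nature of the domination (rather than mere point-wise domination) plays the decisive role.
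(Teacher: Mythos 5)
Your proof is correct and follows essentially the same route as the paper: induction on the number of summands, with the contracting/expanding dichotomy and Propositions \ref{uint} and \ref{brinint} (passing to $f^{-1}$ in the non-expanding case, which the paper leaves implicit) giving unique integrability of the two-way groupings, the inductive hypothesis handling the proper sub-sums, and a Lemma \ref{tribound}/Theorem \ref{qigps} argument for quasi-isometry of the full bundle. The only organizational differences are that the paper runs over every cut $E(1,i)\oplus_< E(i+1,n)$ and obtains the middle sub-sums as intersections $E(1,j)\cap E(i,n)$, and quotes Lemma \ref{twosum} at the end, whereas you get all proper sub-sums from the inductive hypothesis applied to $E^1\oplus\cdots\oplus E^{n-1}$ and $E^2\oplus\cdots\oplus E^n$ and re-derive the needed half of Lemma \ref{twosum} inline; both variants are fine.
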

To prove this theorem, first consider the case $n=2$.

\begin{lemma} \label{twosum}
    Suppose $f$ has a continuous, $Tf$-invariant, uniquely integrable
    subbundle $E \subset TM$ with absolutely dominated splitting
    $E = \Ec \oplus_< \Eu$
    such that $\Ec$ and $\Eu$ are tangent to quasi-isometric foliations $\Wc$
    and $\Wu$.  Then, the foliation tangent to $E$ is quasi-isometric.
\end{lemma}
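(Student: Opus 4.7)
The plan is to show that any two points on a leaf of the foliation $W$ tangent to $E$ can be joined inside $W$ by a concatenation of two paths of controlled length, one along a leaf of $W^c$ and one along a leaf of $W^u$.

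Concretely, since $E$ is uniquely integrable by hypothesis, let $W$ be the tangent foliation. Pick $x,y \in M$ on a common leaf of $W$. Theorem \ref{qigps}, which was just proved, applies to $W$ with its absolutely dominated splitting $E = \Ec \oplus_< \Eu$ and gives a unique intersection point
\[
    z \in \Wc(x) \cap \Wu(y).
\]
Since the $\Wc$- and $\Wu$-leaves through $z$ lie inside the $W$-leaf through $z$ (which is the same as the $W$-leaf through $x$ and through $y$), a path from $x$ to $z$ along $\Wc(z)$ followed by a path from $z$ to $y$ along $\Wu(z)$ is a piecewise $C^1$ path inside the $W$-leaf. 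Thus
\[
    d_W(x,y) \le d_c(x,z) + d_u(z,y),
\]
where $d_c$ and $d_u$ denote leafwise distance along $\Wc$ and $\Wu$.

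The point now is to bound each term on the right by $d(x,y)$. For this, apply Lemma \ref{tribound} with base point $z$: since $x \in \Wc(z)$ and $y \in \Wu(z)$, and using the strengthened form of the lemma recorded in the remark (which uses that $\Wc$ and $\Wu$ are quasi-isometric), there is a constant $c>0$, independent of the choice of $x,y,z$, such that
\[
    \max\{d_c(z,x),\, d_u(z,y)\} \le c\,d(x,y).
\]
Combining the two displays gives $d_W(x,y) \le 2c\,d(x,y)$, so $W$ is quasi-isometric with constant $Q = 2c$.

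I do not expect a serious obstacle: the two ingredients (Theorem \ref{qigps} and Lemma \ref{tribound}) have done the real work, and the argument is essentially a clean triangle-inequality bookkeeping step. The only thing to be careful about is the matching of base points when invoking Lemma \ref{tribound}: one must use the intersection point $z$, not $x$ or $y$, as the vertex at which the $\Wc$- and $\Wu$-legs meet, so that the hypotheses of the lemma are satisfied and the bound controls the two leafwise distances by the ambient distance $d(x,y)$.
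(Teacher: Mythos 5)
Your proof is correct and uses the same two ingredients as the paper's: the existence of the intersection point $z \in \Wc(x) \cap \Wu(y)$ supplied by Theorem \ref{qigps}, and the estimate of Lemma \ref{tribound} in the leafwise form of its remark, applied with $z$ as the vertex. The only difference is organizational: the paper derives the two-sided comparison $d_W(y,z) \myrel d_M(y,z)$ by applying Lemma \ref{tribound} a second time, to the induced map on the disjoint union of leaves $\Lspace$, whereas you note that only the upper bound $d_W \le Q\,d_M$ is needed and get it directly from the triangle inequality inside the $W$-leaf (legitimate, since curves tangent to $\Ec$ or $\Eu$ stay in a single leaf of $W$ by unique integrability of $E$) — a mild streamlining rather than a different method.
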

\begin{proof}
    Suppose $\Phi(x,y,z)$ and $\Psi(x,y,z)$ are real-valued formulae which are
    well-defined when $x \in M$, $y \in \Wu(x)$, and $z \in \Wc(x)$.  Adopt the
    notation $\Phi(x,y,z) \myrel \Psi(x,y,z)$ if there is a constant $K>1$ such
    that
    \[
        \frac{1}{K} \Psi(x,y,z)  \le  \Phi(x,y,z)  \le   K \Psi(x,y,z)
    \]
    for all such triples of points.

    For instance, with this notation, Lemma \ref{tribound} can be concisely
    stated as
    \begin{equation} \label{maxM}
        \max \{ d_M(x,y), d_M(x,z) \} \myrel d_M(y,z)
    \end{equation}
    where $d=d_M$ is the metric on $M$.  Here, we have applied the lemma to
    the diffeomorphism $f$ on the manifold $M$.
    As in the proof of Theorem \ref{qigps}, let $\Lspace$ be the disjoint union
    of the leaves of $W$ with $i:\Lspace \to M$ the canonical bijection and
    $g:\Lspace \to \Lspace$ defined by the relation $i \circ g = f \circ i$.
    Applying Lemma \ref{tribound} to $g$ implies that
    \begin{equation} \label{maxW}
        \max \{ d_W(x,y), d_W(x,z) \} \myrel d_W(y,z)
    \end{equation}
    where $d_W$ denotes the distance along a leaf of $W$.  

    Using the quasi-isometry of $\Wu$ in $M$,
    \[
        d_M(x,y)  \le  d_W(x,y)  \le  d_u(x,y)  \le  Q d_M(x,y)
    \]
    so that $d_W(x,y) \myrel d_M(x,y)$ and similarly $d_W(x,z) \myrel d_M(x,z)$.
    These two relations combine to give
    \begin{equation} \label{maxmax}
        \max \{ d_W(x,y), d_W(x,z) \} \myrel \max \{ d_M(x,y), d_M(x,z) \}.
    \end{equation}
    It is clear that the relation ``$\myrel$'' is transitive, so that the
    relations \eqref{maxM}, \eqref{maxW}, and \eqref{maxmax} imply
    \begin{equation} \label{Wqi}
        d_W(y,z) \myrel d_M(y,z).
    \end{equation}
    Now, if $y$ and $z$ are any two points on the same leaf of $W$, Theorem
    \ref{qigps} implies that there is $x \in \Wu(y) \cap \Wc(z)$.  That is,
    $(x,y,z)$ is a valid triple and the above relation \eqref{Wqi} states that
    the distances $d_W(y,z)$ and $d_M(y,z)$ differ by at most a bounded
    proportion; in other words, the foliation $W$ is quasi-isometric.
\end{proof}
\begin{proof}
    [Proof of Theorem \ref{nsum}.]
    Assume that the theorem has already been proven for all splittings with
    less than $n$ summands and consider a splitting with exactly $n$ summands.
    Define $E(i,j)=E^i \oplus \cdots \oplus E^j$, so that
    $E=E(1,n)$ is the full bundle.  For each $i$, $E = E(1,i) \oplus_< E(i+1,n)$
    is an absolutely dominated splitting and either $E(1,i)$ is contracting or
    $E(i+1,n)$ is expanding.  Assume the latter holds.  Then, Proposition
    \ref{uint} implies that $E(i+1,n)$ is uniquely integrable.  By the
    inductive hypothesis, all of the subbundles $E(j,k)$ for $i+1 \le j \le k \le n$
    are tangent to quasi-isometric foliations.  In particular, the foliation
    tangent to $E(i+1,n)$ is quasi-isometric and Proposition \ref{brinint}
    implies $E(1,i)$ is uniquely integrable.

    We have shown that every bundle of the form $E(1,i)$ or $E(j,n)$ is
    uniquely integrable.  As the property of unique integrability survives
    under intersection, each $E(i,j) = E(1,j) \cap E(i,n)$ is uniquely
    integrable.  The resulting foliation tangent to $E(i,j)$ is
    quasi-isometric when $j-i<n-1$ due to the inductive hypothesis.

    To see that the foliation tangent to $E=E(1,n)$ is quasi-isometric, write
    it as $E(1,1) \oplus E(2,n)$ and apply Lemma \ref{twosum}.
\end{proof}
Theorem \ref{introgps} now follows as a consequence of Theorems \ref{qigps} and
\ref{nsum}.

\section{Holonomy} \label{secholo} 
\begin{defn}
    Let $\F$ be a foliation on a manifold $M$, $\alpha$ a closed curve on a
    leaf of $\F$, and $\tau \subset M$ a small plaque transverse to $\F$
    passing through a point $x$ on $\alpha$ and such that $\dim \F + \dim \tau = \dim
    M$.  Then, there is a small neighbourhood $\tau_0 \subset \tau$ containing
    $x$ and a map $h:\tau_0 \to \tau$ defined by following leaves of $\F$ along
    paths close to $\alpha$.  The germ of this map is the holonomy along
    $\alpha$.  If, for a foliation $\F$, every such choice of $\alpha$, $\tau$,
    and $\tau_0$ yields a map $h:\tau_0 \to \tau$ which is the identity map, we
    say that $\F$ is \emph{without holonomy}.
\end{defn}
\begin{prop} \label{noholo}
    Suppose $M$ is a compact connected manifold with an abelian fundamental
    group, and $f:M \to M$ is partially hyperbolic.  If $\Wu$ and $\Ws$
    are quasi-isometric when lifted to the universal cover, then $\Wc$ is
    without holonomy.
      \end{prop}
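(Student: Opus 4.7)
The plan is to lift the problem to the universal cover, recast triviality of holonomy as the statement that a certain deck transformation preserves nearby center leaves, and then force this preservation via the abelian hypothesis together with partial hyperbolicity and the quasi-isometry of $\Wu$ and $\Ws$.

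Let $\alpha$ be a closed loop based at $x$ on a leaf of $\Wc$, and let $\gamma \in \pi_1(M)$ be the associated deck transformation. Lifting $\alpha$ at $\tilde x$ produces a path in $\tilde M$ ending at $\gamma \tilde x \in \Wc(\tilde x)$. Unwinding the definition of holonomy through the covering map, one checks that the holonomy along $\alpha$ is trivial if and only if $\gamma \tilde y \in \Wc(\tilde y)$ for every $\tilde y$ in a small neighborhood of $\tilde x$; this is what we must establish.

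The abelian hypothesis supplies a uniform displacement bound. Since every deck transformation $\delta$ commutes with $\gamma$, the continuous function $\phi(\tilde y) := d_{\tilde M}(\tilde y, \gamma \tilde y)$ is $\pi_1(M)$-invariant, descends to the compact manifold $M$, and so satisfies $\phi \leq C$ for some $C > 0$. Since the lift $\tilde f$ of $f$ also commutes with $\gamma$, we have
\[
    d_{\tilde M}(\tilde f^n \tilde y, \tilde f^n \gamma \tilde y) = \phi(\tilde f^n \tilde y) \leq C
\]
for every $\tilde y \in \tilde M$ and every $n \in \Z$. In particular the orbits of $\tilde y$ and $\gamma \tilde y$ shadow each other at distance $\leq C$ for all time.

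Combined with partial hyperbolicity, quasi-isometry constrains where $\gamma \tilde y$ can sit. If $\gamma \tilde y \in \Wu(\tilde y) \setminus \{\tilde y\}$, quasi-isometry of $\Wu$ gives $d_u \leq Q\, d_{\tilde M} + Q$ while expansion in $\Eu$ gives $d_u(\tilde f^n \tilde y, \tilde f^n \gamma \tilde y)$ growing at rate $\mu^n$; together these force $d_{\tilde M}(\tilde f^n \tilde y, \tilde f^n \gamma \tilde y) \to \infty$, contradicting the bound $C$. Symmetrically, backward iteration rules out $\gamma \tilde y \in \Ws(\tilde y)\setminus\{\tilde y\}$. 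To promote these to $\gamma \tilde y \in \Wc(\tilde y)$, one uses the local product structure of the splitting: forward iteration must annihilate any $\Eu$-component of the displacement, and backward iteration any $\Es$-component, leaving only the center direction. The main obstacle is making this last step rigorous without assuming a priori global product structure, since $C$ need not be smaller than the local product radius; the idea is to chain local product charts along the full orbit, using the quasi-isometric control of $\Wu$ and $\Ws$ to certify that any nonzero $\Eu$ (respectively $\Es$) contribution eventually escapes the bound, until the only possibility consistent with bounded displacement for all $n \in \Z$ is $\gamma \tilde y \in \Wc(\tilde y)$.
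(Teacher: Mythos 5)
There is a genuine gap, and it sits at the load-bearing step. You write that ``the lift $\tilde f$ of $f$ also commutes with $\gamma$,'' and deduce $d_{\tilde M}(\tilde f^n \tilde y, \tilde f^n \gamma \tilde y) = \phi(\tilde f^n \tilde y) \le C$ for all $n$. This is false in general: a lift satisfies $\tilde f \circ \gamma \circ \tilde f^{-1} = f_*(\gamma)$, and a partially hyperbolic $f$ typically acts nontrivially on $\pi_1(M)$ (for a hyperbolic toral automorphism no nonzero class is fixed). So $d(\tilde f^n \tilde y, \tilde f^n \gamma \tilde y)$ is the displacement of the deck transformation $f_*^n(\gamma)$ at $\tilde f^n\tilde y$, and this is not uniformly bounded in $n$; even for your specific $\gamma$, which comes from a loop $\alpha$ in a center leaf, the iterated loops $f^n\circ\alpha$ may have length growing like $\lambda^n$ with $\lambda>1$, since in the definition of partial hyperbolicity the center is only dominated, not non-expanding. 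Everything after this point (ruling out unstable/stable components by comparing $\mu^n$-growth against the constant $C$) rests on this bounded shadowing and therefore collapses. In addition, the final step you flag as ``the main obstacle'' is not a technicality: even granting bounded displacement for all $n \in \Z$, concluding $\gamma\tilde y \in \Wc(\tilde y)$ is a coherence/product-structure type statement that is not available here (and your opening reformulation of ``without holonomy'' as leaf-preservation by $\gamma$ also needs an argument in the direction you use, since a center leaf may meet the transversal $\gamma\tilde\tau$ in more than the one point reached by the holonomy path).

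For contrast, the paper never compares an orbit with its $\gamma$-translate and never needs a uniform bound. From a non-trivial holonomy it extracts a loop $\alpha$ in a center leaf that is freely homotopic to $\theta\cdot\beta\cdot\gamma\cdot\theta^{-1}$, where $\beta$ lies in a center leaf and $\gamma$ is a nontrivial segment of an unstable leaf, and then iterates this configuration: the curves $\alpha_n = f^n\circ\alpha$ and $\beta_n$ grow at most like $\lambda^n$, the unstable segment separates its endpoints at rate at least $Q^{-1}\mu^n$ by quasi-isometry of $\Wu$, and the connecting paths $\theta_n$ (which would grow too fast) are replaced at each step by fresh paths $\phi_n$ of length at most $\diam M$. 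The abelian hypothesis is used exactly to justify this replacement: conjugating by $[\phi_n\cdot\theta_n^{-1}]$ does not change the homotopy class, so the lifted endpoints still match, and the rate comparison $\lambda^n$ versus $\mu^n$ gives the contradiction. If you want to salvage your outline, you would have to replace the uniform constant $C$ by the correct $\lambda^n$-type bound on the displacement of $f_*^n(\gamma)$ and then argue by rates, at which point you are essentially reconstructing the paper's proof; the abelian hypothesis would then enter through the base-point/conjugation bookkeeping rather than through centrality of $\gamma$.
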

\begin{remark}
    This is a restatement of Theorem \ref{intronoholo}.  The two are equivalent
    as the center-stable bundle of a diffeomorphism which is partially
    hyperbolic in the strong sense can always be viewed as the center bundle
    of the same system regarded as partially hyperbolic in the weak sense.
\end{remark}
\begin{proof}
    First, consider the weak sense of partial hyperbolicity where $\Es=0$.

    Suppose holonomy along the center foliation is non-trivial along a closed
    path $\alpha: [0,1] \to \Wc(x)$ based at a point $x \in M$.  We are free to
    consider the holonomy as defined on a small plaque of $\Wu(x)$.  Then, as
    depicted in Figure \ref{holofig}, there are
    are distinct points $y,z \in \Wu(x)$ and paths $\beta: [0,1] \to \Wc(y)$,
    $\gamma, \theta: [0,1] \to \Wu(x)$ such that
    \begin{align*}
        \alpha(0) &= x & \alpha(1) &= x \\
        \beta(0) &= y & \beta(1) &= z \\
        \gamma(0) &= z & \gamma(1) &= y \\
        \theta(0) &= x & \theta(1) &= y
    \end{align*}
    and the concatenation $\theta \cdot \beta \cdot \gamma \cdot \theta \inv$ is
    homotopic to $\alpha$.
    \begin{figure}
        \begin{center}
            \includegraphics{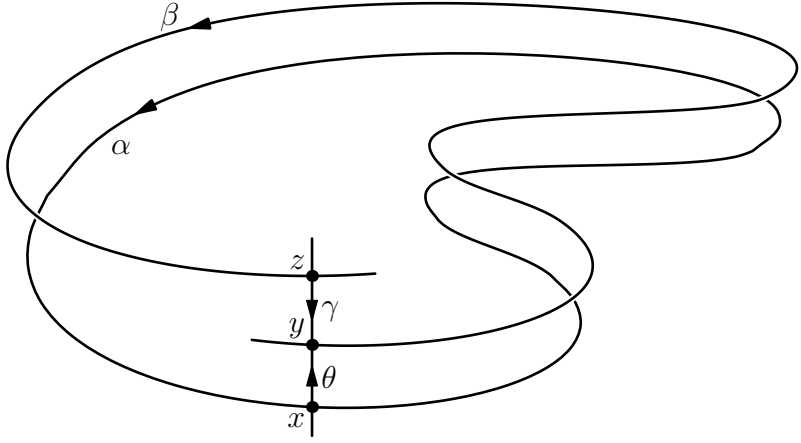}
            \caption{A non-trivial holonomy.}
            \label{holofig}
        \end{center}  \end{figure}
    For $n \ge 0$, define
    \[
        x_n = f^n(x), \quad
        y_n = f^n(y), \quad
        \alpha_n = f^n \circ \alpha, \quad
        \beta_n = f^n \circ \beta, \quad
        \gamma_n = f^n \circ \gamma.
    \]
    Define paths
    $\phi_n:[0,1] \to M$, such that $\phi_n(0) = x_n$, $\phi_n(1) = y_n$ and
    \[    
        \length \phi_n  \le  \diam M.
          \]
    Note that, unlike the above sequences, we do not require $\phi_n$ to be
    equal to $f^n \circ \phi_0$.  Also note that
    \[    
        [\alpha_n] =
        [\theta_n \cdot \beta_n \cdot \gamma_n \cdot \theta_n \inv] =
        [\phi_n \cdot \beta_n \cdot \gamma_n \cdot \phi_n \inv] \in \pi_1(M,x_n)
    \]
    where last equality follows by conjugating by $[\phi_n \cdot \theta_n \inv]$
    and using the assumption that the fundamental group is abelian.

    Now consider the universal cover $\tilde M$.  Choose $\tilde x_n \in \tilde M$
    over each point $x_n$ and lift each curve $\alpha_n$ to a curve $\tilde
    \alpha_n$ whose starting point $\tilde \alpha_n(0)$ is $\tilde x_n$.

    Lift $\phi_n$, $\beta_n$, $\gamma_n$, and again $\phi_n$ to curves $\tilde
    \phi_n$, $\tilde \beta_n$, $\tilde \gamma_n$, and $\ttphi_n$ so that
    the concatenation
    $\tilde \omega_n :=
    \tilde \phi_n \cdot \tilde \beta_n \cdot \tilde \gamma_n \cdot \ttphi_n \inv$
    is a continuous curve starting at $\tilde x_n$.
    Since
    $\tilde \alpha_n(0) = \tilde \omega_n(0)$
    and the projections of $\tilde \alpha_n$ and $\tilde \omega_n$ to $M$ are
    homotopic,
    it follows that 
    $\tilde \alpha_n(1) = \tilde \omega_n(1)$.

    There are constants $1 < \lambda < \mu$ such that
    \[
        \|Tf^n v\| < \lambda^n < \mu^n < \|Tf^n w\|
    \]
    for unit vectors $v \in \Ec$, $w \in \Eu$ and sufficiently large $n$.
    Then
    \[
        d(\tilde \alpha_n(0), \tilde \alpha_n(1))
         \le  \length \tilde \alpha_n
         \le  \lambda^n \length \alpha
    \]
    and, if $Q$ is the constant of quasi-isometry for $\Wu$,
    \begin{align*}
        d(\tilde \omega_n(0), \tilde \omega_n(1))
        & \ge  d(\tilde \gamma^n(0), \tilde \gamma^n(1))
        - \length \tilde \beta^n
        - \length \tilde \phi^n
        - \length \ttphi^n \\
        & \ge  Q \inv \mu^n d_u(\gamma(0), \gamma(1))
        - \lambda^n \length \beta
        - 2 \diam M.
    \end{align*}
    For large $n$, these two estimates yield a contradiction.  This finishes
    the case when $\Es=0$.  The case where $\Eu=0$ follows by analogy.

    In the case that both $\Eu$ and $\Es$ are non-zero, first observe
    that the foliations $\Wcs$ tangent to $\Es \oplus \Ec$ and $\Wcu$ tangent to
    $\Ec \oplus \Eu$ are without holonomy, as each can be regarded as center
    foliation of a system which is partially hyperbolic in the weak sense.
    Then, $\Wc$ is without holonomy as it is the intersection of two transverse
    foliations without holonomy.
\end{proof}
We now prove Theorem \ref{introhomeo}.  That is, assuming Global Product
Structure and that $\Wc$ is without holonomy, we show that all of the center
leaves are homeomorphic.

\begin{proof}
    [Proof of Theorem \ref{introhomeo}.]
    Let $p:\tilde M \to M$ be the universal covering map and fix $x_0,y_0 \in
    \tilde M$.  Our goal is to construct a homeomorphism $h:\Wc(x_0) \to
    \Wc(y_0)$ such that for $x_1, x_2 \in \Wc(x)$, $p(x_1) = p(x_2)$ if and only
    if $p(h(x_1)) = p(h(x_2))$.  Then, $h$ will descend to a homeomorphism of
    the leaves $\Wc(p(x_0))$ and $\Wc(p(y_0))$ on $M$.

    First, consider the case where $x_0$ and $y_0$ lie on the same
    center-unstable leaf (which is the only case if $\Es=0$).
    Define $h$ as the map which assigns $x \in \Wc(x_0)$ to
    the unique intersection of $\Wu(x)$ and $\Wc(y_0)$.  This map exists and is
    continuous due to the Global Product Structure.

    Suppose $x_1, x_2 \in \Wc(x_0)$ satisfy $p(x_1)=p(x_2)$.  Take paths
    $\alpha:[0,1] \to \Wc(x)$ and $\beta:[0,1] \to \Wu(x)$ such that
    $\alpha(0)=\beta(0)=x_1$, $\alpha(1)=x_2$ and $\beta(1)=h(x_1)$.

    Define $\phi:[0,1] \times [0,1] \to \tilde M$ by
    $\phi(s,t) = \Wu(\alpha(s)) \cap \Wc(\beta(t))$.
    Again, Global Product Structure guarantees that this is well-defined and
    continuous.  Consider the set
    \[
        S = \{ t \in [0,1] : p(\phi(0,t)) = p(\phi(1,t)) \}.
    \]
    It is closed due to the continuity of $p$ and $\phi$.  The assumption that
    the center foliation is without holonomy implies that $S$ is open
    in the relative topology of $[0,1]$, and as $0 \in S$ by the definition of
    $\phi$, it follows that $S = [0,1]$.  In particular,
    \[
        p(h(x_1)) = p(\phi(0,1)) = p(\phi(1,1)) = p(h(x_2)).
    \]
    By reversing the roles of $x_0$ and $y_0$, it is easy to find an inverse
    for $h$ and show that
    $p(h(x_1)) = p(h(x_2))$ implies $p(x_1) = p(x_2)$.  Thus, $h$ descends to a
    homeomorphism of $\Wc(p(x_0))$ and $\Wc(p(y_0))$ as center leaves on the
    manifold $M$.

    That finishes the case of two center leaves on the same center-unstable
    leaf.  The same argument applies to center leaves on the same
    center-stable leaf, and then, by composing such homeomorphisms, one can
    construct a homeomorphism between any two center leaves on $M$.
\end{proof}
\medskip

The last task of this section is to prove Theorem \ref{easynil}.
We use the following classification given by Parwani
\cite[Theorem 1.10]{Parwani}.

\begin{thm}
    [Parwani, \cite{Parwani}] \label{solvclass}
    Suppose $f:M \to M$ is partially hyperbolic
    with $\dim \Es = \dim \Ec = \dim \Eu = 1$.
    \begin{enumerate}
        \item If $\pi_1(M)$ is solvable, $M$ is finitely covered by a torus
        bundle over the circle.
        \item If $\pi_1(M)$ is nilpotent, $M$ is a circle bundle over the torus.
      \end{enumerate}  \end{thm}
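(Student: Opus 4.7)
The plan is to show that $\pi_1(M)$ is nilpotent and then invoke case (2) of Parwani's Theorem~\ref{solvclass} to conclude that $M$ is a circle bundle over $\T^2$. Two hypotheses do the work: GPS controls the topology of $\tilde M$, while the no-holonomy hypothesis on $\Wcs$ controls the deck action of $\pi_1(M)$ on the transverse leaf space. First, iterated GPS provides a homeomorphism
\[
  \tilde M \to \tilde\Wu(x_0) \times \tilde\Wc(x_0) \times \tilde\Ws(x_0),
\]
and each one-dimensional factor, being a connected $1$-manifold inside the simply connected $\tilde M$, must be $\R$ rather than $S^1$. So $\tilde M \cong \R^3$, every leaf of $\tilde\Wcs$ is a plane $\R^2$, and the leaf space $T$ of $\tilde\Wcs$ is homeomorphic to $\R$.

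The deck group $\pi_1(M)$ acts on $\tilde M$ preserving $\tilde\Wcs$ and descends to an action $\rho : \pi_1(M) \to \operatorname{Homeo}(T)$ (passing to a double cover if needed so that the action is orientation-preserving). The key step is to show that every $\gamma \notin \ker\rho$ acts freely on $T$. If $\gamma$ fixes some $t \in T$, then $\gamma$ preserves the leaf $L_t$ of $\tilde\Wcs$, and its projection is a loop on a leaf of $\Wcs$; the no-holonomy hypothesis then forces the germ of $\rho(\gamma)$ at $t$ to be the identity. Thus the fixed set of $\rho(\gamma)$ in $T$ is open, and being closed by continuity, it is all of $T \cong \R$. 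Consequently $\pi_1(M)/\ker\rho$ acts freely on $\R$ by orientation-preserving homeomorphisms, and Hölder's theorem identifies it as a torsion-free abelian subgroup of $(\R,+)$.

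The kernel $\ker\rho$ is the image of $\pi_1(L) \to \pi_1(M)$ for any leaf $L$ of $\Wcs$, and since each lift of $L$ is the plane $\R^2$, $L$ is an aspherical $2$-manifold and $\pi_1(L)$ is a surface group (trivial, $\Z$, $\Z^2$, or the Klein bottle group). This yields a short exact sequence $1 \to \pi_1(L) \to \pi_1(M) \to A \to 1$ with $A \subset \R$ abelian and $\pi_1(L)$ virtually abelian, so $\pi_1(M)$ is already solvable. The main obstacle will be to upgrade solvable to nilpotent: my approach is to exploit the induced automorphism $f_* : \pi_1(M) \to \pi_1(M)$, whose action on $T$ is conjugate, via the GPS identification of $T$ with an unstable leaf, to the expanding action of $\tilde f$ on $\tilde\Wu(x_0)$. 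Iterating $f_*$ on the conjugation action of $A$ on $\pi_1(L)$ and using compactness of $M$ to bound generators of $\pi_1(L)$, I would force the conjugation action of $A$ on $\pi_1(L)$ to be trivial (possibly after passing to a finite-index subgroup), so that the extension is central and $\pi_1(M)$ is nilpotent. Case (2) of Theorem~\ref{solvclass} then identifies $M$ as a circle bundle over $\T^2$.
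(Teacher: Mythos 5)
Your proposal does not prove the statement in question. The statement is Parwani's classification theorem (Theorem \ref{solvclass}): assuming only that $f$ is partially hyperbolic with $\dim \Es = \dim \Ec = \dim \Eu = 1$, solvability of $\pi_1(M)$ forces $M$ to be finitely covered by a torus bundle over the circle, and nilpotency forces $M$ to be a circle bundle over the torus. This is a quoted external result --- the paper offers no proof of it, it is cited from Parwani --- and any proof of it would have to be an argument in $3$-manifold topology and foliation theory starting from those hypotheses alone. What you have written instead is a sketch of Theorem \ref{easynil}: you add the hypotheses of Global Product Structure and of $\Wcs$ being without holonomy (neither of which appears in the statement), you aim only at the circle-bundle conclusion (never addressing the solvable case, part (1)), and in your final step you explicitly ``invoke case (2) of Theorem~\ref{solvclass}.'' As a proof of Theorem \ref{solvclass} this is circular; as written it is an argument for a different theorem of the paper.

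Even judged as a proof of Theorem \ref{easynil}, the sketch has a substantive gap at exactly the point the paper handles differently: the upgrade from solvable to nilpotent. You propose to iterate $f_*$ on the conjugation action of $A$ on $\pi_1(L)$ and ``force'' the extension to be central, but no mechanism is given for why expansion on the leaf space should kill the conjugation action, and leaves $L$ of $\Wcs$ need not be compact, so ``bounding generators of $\pi_1(L)$ by compactness of $M$'' does not make sense as stated. The paper instead proves Proposition \ref{cyclicleaf} (every $\Wcs$-leaf is a plane, cylinder, or M\"obius band, via the Reebless property, $\pi_1$-injectivity of leaves, H\"older's theorem, and a Lefschetz/first-return argument ruling out torus leaves), then uses the commutator-subgroup property of codimension-one foliations without holonomy ($[\pi_1(M),\pi_1(M)] \subset \pi_1(S)$, hence cyclic) together with Evans--Moser's classification of solvable $3$-manifold groups to conclude nilpotency before citing Theorem \ref{solvclass}. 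If your goal was Theorem \ref{easynil}, you would need to replace your heuristic centrality step with an argument of that kind; if your goal was Theorem \ref{solvclass} itself, the entire approach must be discarded, since it assumes the theorem.
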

The first step in proving Theorem \ref{easynil} is to classify the possible
leaves of the foliations.

\begin{prop} \label{cyclicleaf}
    Suppose $f:M \to M$ is partially hyperbolic with Global Product
    Structure and $\dim \Es = \dim \Ec = \dim \Eu = 1$.  Then, every leaf of $\Wcs$
    is either a plane, a cylinder, or a M\"obius band.
\end{prop}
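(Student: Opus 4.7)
The plan is to lift everything to the universal cover $\tilde M$ and study a lifted $\Wcs$-leaf $\tilde L \subset \tilde M$ together with its stabilizer $\Gamma \subset \pi_1(M)$, so that a given $\Wcs$-leaf of $M$ is the quotient $L = \tilde L / \Gamma$. Using Global Product Structure, for any base point $x_0 \in \tilde L$ the map
\[
    \Phi : \Wc(x_0) \times \Ws(x_0) \to \tilde L, \qquad \Phi(c,s) = \Ws(c) \cap \Wc(s),
\]
is a homeomorphism, presenting $\tilde L \cong \R_c \times \R_s$ in such a way that the restricted foliations $\Wc|_{\tilde L}$ and $\Ws|_{\tilde L}$ become the horizontal and vertical foliations. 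Because deck transformations preserve each of $\Wc$ and $\Ws$, every $g \in \Gamma$ acts as a product $g(c,s) = (\phi_g(c), \psi_g(s))$ with $\phi_g, \psi_g \in \operatorname{Homeo}(\R)$.

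The classical graph-transform construction of stable manifolds shows that every stable leaf of $f$ is homeomorphic to $\R$, and I would use this to force $\phi_g$ to be fixed-point free for every $g \ne e$: if $\phi_g(c_0) = c_0$ for some $c_0$, then $g$ would preserve the vertical line $\{c = c_0\} \subset \tilde L$ setwise and, by freeness of the $\Gamma$-action, act on it without fixed points via $\psi_g$, making the corresponding stable leaf in $L$ a circle. Hence every non-trivial $\phi_g$ is an orientation-preserving fixed-point free homeomorphism of $\R$ (orientation-reversing homeomorphisms of $\R$ always have a fixed point), and H\"older's theorem applied to the resulting free action identifies $\Gamma \cong \phi(\Gamma)$ with a subgroup of $(\R,+)$, making $\Gamma$ torsion-free abelian. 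Since the only torsion-free abelian groups arising as fundamental groups of connected $2$-manifolds are $\{e\}$, $\Z$ and $\Z^2$, this gives $\Gamma \in \{\{e\}, \Z, \Z^2\}$: the trivial case gives $L$ a plane, and the cyclic case gives a cylinder or a M\"obius band depending on whether $\psi_g$ preserves or reverses orientation, so the proof reduces to ruling out $\Gamma = \Z^2$, which would force $L$ to be a torus.

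Ruling out the torus is the main obstacle, which I would handle dynamically rather than topologically. Assume for contradiction that $L$ is compact. The same GPS setup combined with the $\Wu$-foliation gives a homeomorphism $\tilde M \cong \tilde L \times \R \cong \R^3$ via $(y,t) \mapsto$ the arc-length-$t$ point along $\Wu(y)$; in particular $\pi_1(M)$ has cohomological dimension three while $\Gamma \cong \Z^2$ has cohomological dimension two, so $[\pi_1(M) : \Gamma] = \infty$. Transversality of $\Wu$ and $\Wcs$ makes $\Wu(x) \cap L$ a discrete and therefore, by compactness of $L$, a finite subset. On the other hand GPS supplies a distinct intersection point of $\Wu(\tilde x)$ with each of the infinitely many lifts $\gamma \tilde L$ for $\gamma \in \pi_1(M)/\Gamma$; these infinitely many points all lie on the single line $\Wu(\tilde x) \cong \R$ and project into the finite set $\Wu(x) \cap L$. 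Pigeonhole then produces two such points differing by a non-trivial deck transformation $h \in \pi_1(M)$, and $h$ must preserve $\Wu(\tilde x)$ setwise, contradicting the fact that unstable leaves of $f$ are homeomorphic to $\R$ so that the stabilizer of $\Wu(\tilde x)$ in $\pi_1(M)$ is trivial. This eliminates the torus case and completes the classification.
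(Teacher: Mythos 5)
Your first half runs roughly parallel to the paper: the product structure on a lifted $cs$-leaf, the freeness of the induced action on the center direction (non-free would force a circle stable leaf), and H\"older's theorem giving an abelian (indeed torsion-free abelian) stabilizer, so that the leaf is a plane, cylinder, M\"obius band, or torus. Two small caveats there: you assert without argument that $\Wc(x_0)$ in $\tilde M$ is a line and that the GPS bijection $\Phi$ is a homeomorphism; the paper gets the ``lifted center leaves are lines'' fact from the Reebless property of $\Wcs$ and $\pi_1$-injectivity of its leaves (citing BBI), though one could also extract it from Global Product Structure with some extra work. These are fixable.

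The genuine gap is in your exclusion of the torus leaf. The claim that transversality of $\Wu$ and $\Wcs$ together with compactness of $L$ makes $\Wu(x)\cap L$ \emph{finite} is false: transversality gives discreteness only in the intrinsic topology of the non-compact leaf $\Wu(x)$, and the intersection points may (and in the relevant examples do) accumulate in $L$, since the accumulation point need not lie on $\Wu(x)$ itself. Indeed, your own setup shows what actually happens: when $[\pi_1(M):\Gamma]=\infty$, the unstable leaf $\Wu(\tilde x)$ meets infinitely many lifts $\gamma\tilde L$, and since the covering projection is injective on a lifted unstable leaf, these project to infinitely many distinct points of $\Wu(x)\cap L$ --- no contradiction, just a proof that the intersection is infinite. (Think of a linear unstable foliation crossing a compact torus leaf in $\T^3$: each leaf hits it infinitely often.) So the pigeonhole step never gets off the ground, and the torus case is untouched. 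This is precisely where the paper has to work hardest: it uses Global Product Structure to make the compact leaf $S$ a global cross-section of the flow along unstable leaves, considers the first return map $\phi:S\to S$, rules out hyperbolic $\phi_*$ on $H_1(S)$ via the Lefschetz fixed point theorem (a periodic orbit would be a circle unstable leaf), and in the non-hyperbolic case deduces that $\pi_1(M)$ is nilpotent and then invokes Parwani's classification together with the nilmanifold classification results to exclude torus $cs$-leaves. Some substitute for that dynamical/classification input is needed; a soft counting argument of the kind you propose cannot rule out compact $cs$-leaves.
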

\begin{remark}
    This is a statement about leaves on $M$, not the universal cover $\tilde M$
    where the leaves are all planes.
\end{remark}
\begin{remark}
    In the proof, we use the known fact that each leaf of an $n$-dimensional
    stable or unstable foliation is homeomorphic to $\R^n$.
\end{remark}
\begin{proof}
    Fix $S \subset M$ a $cs$-leaf and $L$ a center leaf lying in $S$.  The
    foliation $\Wcs$ is Reebless \cite{BBI1}.  Therefore, the embedding $S
    \hookrightarrow M$ induces an injection $\pi_1(S) \hookrightarrow \pi_1(M)$
    \cite{cancon2}\cite{solodov1984components}.
    Let $\tilde L \subset \tilde M$ be a connected component of the pre-image of
    $L$ under the universal covering map $p:\tilde M \to M$.  Then
    $\pi_1$-injectivity implies $\tilde L$ is simply connected and therefore a
    line.  For distinct points $x,y \in \tilde L$, the stable manifolds $\Ws(x)$
    and $\Ws(y)$ are disjoint subsets of $\tilde M$ and therefore $\tilde S :=
    \bigcap_{x \in \tilde L} \Ws(x),$ a line bundle over a line, is topologically
    a plane.  It follows from Global Product Structure that $\tilde S$ is a
    connected component of $p \inv(S)$ and that $p(\tilde S)=S$.
    
    The set $\tilde S$ can be viewed as the universal cover of $S$ and each
    $\alpha \in \pi_1(S)$ can be viewed as a deck transformation $\alpha:\tilde S
    \to \tilde S$.  Define a group action of $\pi_1(S)$ on $\tilde L$ by defining
    $\alpha \cdot x$ as the unique intersection of $\Ws(\alpha(x))$ and $L$.  If
    $\alpha \cdot x = x$, then $\alpha(x)$ and $x$ would lie on the same stable
    leaf and, assuming $\alpha$ is non-trivial, projection down to $M$ would
    yield a stable leaf which is a closed circle, a contradiction.  Therefore,
    the action of $\pi_1(S)$ on the line $L$ is free.  A classic theorem of
    H\"older concerning such actions implies that $\pi_1(S)$ is abelian (for
    instance, see \cite[Theorem 6.10]{ghys2001groups}).

    If the subbundle $\Eu$ is not orientable, there is a double cover of $M$
    for which the lift of $\Eu$ is orientable.  Further, there is a unique
    lift of $f$ to a diffeomorphism of the cover which preserves this
    orientation; this map is also partially hyperbolic.  Thus, modulo a
    finite cover, we may assume $\Eu$ is orientable.  By the same reasoning,
    assume $\Ecs$ is orientable as well.
    Then the $cs$-leaf $S$ is orientable with an abelian fundamental group and
    is either a plane, a cylinder, or a torus.
    
    Assume for the purpose of
    contradiction that $S$ is a torus.  By Global Product Structure, every
    unstable leaf in $\tilde M$ intersects $\tilde S$ and therefore every
    unstable leaf in $M$ intersects $S$.  The orientation of $\Eu$ defines a
    flow along unstable leaves, and $S$ can be viewed as a global cross
    section.  This defines a first return map $\phi:S \to S$.
    If the induced map $\phi_* : H_1(S) \to H_1(S)$ on the first homology group
    is hyperbolic, the Lefschetz fixed point theorem implies that the flow
    along unstable leaves has a periodic orbit, but this is impossible.
    If $\phi_*$ is not hyperbolic, one can show that $\pi_1(M)$ is nilpotent.
    Then by Theorem \ref{solvclass}, $M$ is a nilmanifold, and the classification
    results in \cite{ham-thesis} and \cite{ham-nil} imply that no $cs$-leaf on
    such a manifold can be a torus.

    We have shown that, when lifted to a finite cover, every leaf of $\Wcs$ is
    either a plane or a cylinder, from which the result follows.
\end{proof}
\begin{proof}
    [Proof of Theorem \ref{easynil}.]
    It is a property of codimension-one foliations without holonomy that if
    $S$ is a leaf, then $\pi_1(S) \hookrightarrow \pi_1(M)$ is injective and,
    after identifying $\pi_1(S)$ with its image, that
    $[\pi_1(M), \pi_1(M)] \subset \pi_1(S).$  See \cite{li2002commutator} for a
    proof in the $C^0$ setting.

    By Proposition \ref{cyclicleaf}, $\pi_1(S)$ is cyclic for any leaf $S$
    of $\Wcs$.  Therefore, the commutator subgroup
    $[\pi_1(M), \pi_1(M)]$ is cyclic, and the fundamental group is
    solvable.  Theorem \ref{solvclass} implies, in particular, that $\pi_1(M)$ is
    infinite.  A classification of all possible solvable fundamental groups
    arising from 3-manifolds is given in \cite{evans1972solvable}.  If such a
    group is infinite and has cyclic commutator subgroup, it is nilpotent.
    Then, Theorem \ref{solvclass} completes the proof.
      \end{proof}
%
%

\section{Examples} \label{anosovexamples} 
The Heisenberg group $\Heis$ consisting of matrices of the form
\[
    \begin{pmatrix}
    1 & x & z \\ 0 & 1 & y \\ 0 & 0 & 1
    \end{pmatrix}
\]
is a Lie group.  The corresponding Lie algebra $\h$ is generated by the
elements
\[
    X = \begin{pmatrix}
    0 & 1 & 0 \\ 0 & 0 & 0 \\ 0 & 0 & 0
    \end{pmatrix},\quad
    Y = \begin{pmatrix}
    0 & 0 & 0 \\ 0 & 0 & 1 \\ 0 & 0 & 0
    \end{pmatrix},\quad \text{and}\quad
    Z = \begin{pmatrix}
    0 & 0 & 1 \\ 0 & 0 & 0 \\ 0 & 0 & 0
    \end{pmatrix}
\]
with $[X,Y]=Z$ and all other brackets zero.  The Lie algebra can also be
viewed as the set of those vector fields which are invariant under the
derivative $d L_g : T\Heis \to T\Heis$ of the action of left multiplication of
the group $L_g : \Heis \to \Heis, h \mapsto g h$.  As vector fields, $X$, $Y$,
and $Z$ integrate to one-dimensional foliations on $\Heis$.

Define a Riemannian metric on $\Heis$ by declaring $X$, $Y$, and $Z$ to form an
orthonormal basis at each point $p \in \Heis$.  One can verify that the
foliations tangent to $X$ and $Y$ are quasi-isometric, but the foliation
tangent to $Z$ is not.  Viewing a two-dimension subspace of $\h$ as a
subbundle of $T\Heis$, $X \oplus Z$ and $Y \oplus Z$ are each integrable and the
resulting two-dimensional foliations are not quasi-isometric.

In \cite{smale}, S.~Smale describes a six-dimensional nilmanifold supporting
Anosov diffeomorphisms.  It is covered by the Lie group $\Heis \times \Heis$
consisting of two copies of the Heisenberg group.  The corresponding Lie
algebra $\h \times \h$ is generated by the elements $X_1, Y_1, Z_1, X_2, Y_2,
Z_2$ where $[X_1,Y_1]=Z_1$, $[X_2,Y_2]=Z_2$, and all other brackets are zero.
There is a lattice $\Gamma \subset \Heis \times \Heis$ defining a compact nilmanifold
$N := (\Heis \times \Heis) / \Gamma$ and a constant $\lambda>1$ associated to
$\Gamma$ such that for any $a,b \in \Z \setminus \{0\}$, the Lie algebra
automorphism
\begin{align*}
    X_1 &\mapsto \lambda^a X_1 & X_2 &\mapsto \lambda^{-a} X_2 \\
    Y_1 &\mapsto \lambda^b Y_1 & Y_2 &\mapsto \lambda^{-b} Y_2 \\
    Z_1 &\mapsto \lambda^{a+b} Z_1 & Z_2 &\mapsto \lambda^{-a-b} Z_2 \\
\end{align*}
induces a Lie group automorphism which takes $\Gamma$ to itself.  Therefore,
each $a,b \in \Z \setminus \{0\}$ defines a diffeomorphism $f_{a,b}:N \to N$ on the
nilmanifold.  If $a+b  \ne  0$, one can verify that $f_{a,b}$ is Anosov.

In particular, if $a=1$ and $b=2$, then $f_{a,b}$ is Anosov with the splitting
\[
    \Eu = \langle X_1, Y_1, Z_1 \rangle \quad \text{and} \quad
    \Es = \langle X_2, Y_2, Z_2 \rangle
\]
and the resulting stable and unstable foliations are quasi-isometric.
If, instead, $a=2$ and $b=-5$, then the splitting is 
\[
    \Eu = \langle X_1, Y_2, Z_2 \rangle \quad \text{and} \quad
    \Es = \langle X_2, Y_1, Z_1 \rangle
\]
and neither foliation is quasi-isometric.

In either of these two examples, the diffeomorphism can alternatively be viewed
as partially hyperbolic by grouping $X_1$ and $X_2$ into a center bundle.
Thus, a dynamically coherent partially hyperbolic system on a nilmanifold may
or may not have quasi-isometric stable and unstable foliations.

\bigskip

\acknowledgement
The author would like to thank Enrique Pujals, Ali Tahzibi, 
and Amie Wilkinson for helpful discussions, as well as the anonymous
reviewer for suggestions to improve the paper's presentation.
The author gratefully acknowledges the financial support of
CNPq (Brazil).


\medskip 

{\sc\small
IMPA,
Estrada Dona Castorina 110,
Rio de Janeiro / Brasil\ \  22460-320 }

\bibliographystyle{plain}
\bibliography{dynamics}

\begin{thebibliography}{10}

\bibitem{Brin-nw}
M.~Brin.
\newblock Nonwandering points of {Anosov} diffeomorphisms.
\newblock {\em Ast\'erisque}, 49:11--18, 1977.

\bibitem{Brin}
M.~Brin.
\newblock On dynamical coherence.
\newblock {\em Ergod.~Th.~and Dynam.~Sys.}, 23:395--401, 2003.

\bibitem{BBI1}
M.~Brin, D.~Burago, and S.~Ivanov.
\newblock On partially hyperbolic diffeomorphisms of 3-manifolds with
  commutative fundamental group.
\newblock {\em Modern dynamical systems and applications}, pages 307--312,
  2004.

\bibitem{BBI2}
M.~Brin, D.~Burago, and S.~Ivanov.
\newblock Dynamical coherence of partially hyperbolic diffeomorphisms of the
  3-torus.
\newblock {\em Journal of Modern Dynamics}, 3(1):1--11, 2009.

\bibitem{BrinPesin}
M.~Brin and Ja. Pesin.
\newblock Partially hyperbolic dynamical systems.
\newblock {\em Math.~USSR Izvestija}, 8:177--218, 1974.

\bibitem{BI}
D.~Burago and S.~Ivanov.
\newblock Partially hyperbolic diffeomorphisms of 3-manifolds with abelian
  fundamental groups.
\newblock {\em Journal of Modern Dynamics}, 2(4):541--580, 2008.

\bibitem{cancon2}
A.~Candel and L.~Conlon.
\newblock {\em Foliations {II}}.
\newblock American Mathematical Society, 2003.

\bibitem{evans1972solvable}
B.~Evans and L.~Moser.
\newblock Solvable fundamental groups of compact 3-manifolds.
\newblock {\em Transactions of the American Mathematical Society},
  168:189--210, 1972.

\bibitem{Franks1}
J.~Franks.
\newblock Anosov diffeomorphisms.
\newblock {\em Global Analysis: Proceedings of the Symposia in Pure
  Mathematics}, 14:61--93, 1970.

\bibitem{ghys2001groups}
{\'E}.~Ghys.
\newblock Groups acting on the circle.
\newblock {\em Enseign.~Math.(2)}, 47(3-4):329--407, 2001.

\bibitem{ham-nil}
A.~Hammerlindl.
\newblock Partial hyperbolicity on 3-dimensional nilmanifolds.
\newblock arXiv:1103.3724.

\bibitem{ham-thesis}
A.~Hammerlindl.
\newblock {\em {Leaf conjugacies on the torus}}.
\newblock PhD thesis, University of Toronto, 2009.

\bibitem{HPS}
M.~Hirsch, C.~Pugh, and M.~Shub.
\newblock {\em Invariant Manifolds}, volume 583 of {\em Lecture Notes in
  Mathematics}.
\newblock Springer-Verlag, 1977.

\bibitem{li2002commutator}
T.~Li.
\newblock Commutator subgroups and foliations without holonomy.
\newblock {\em Proc.~Amer.~Math.~Soc.}, 130(8):2471--2478, 2002.

\bibitem{Manning}
A.~Manning.
\newblock There are no new {Anosov} diffeomorphisms on tori.
\newblock {\em Amer. J. Math.}, 96(3):422--42, 1974.

\bibitem{Parwani}
K.~Parwani.
\newblock On 3-manifolds that support partially hyperbolic diffeomorphisms.
\newblock {\em Nonlinearity}, 23:589--606, 2010.

\bibitem{smale}
S.~Smale.
\newblock Differentiable dynamical systems.
\newblock {\em Bull.~Amer.~Math.~Soc.}, 73:747,817, 1967.

\bibitem{solodov1984components}
VV~Solodov.
\newblock Components of topological foliations.
\newblock {\em Mathematics of the USSR-Sbornik}, 47:329, 1984.

\end{thebibliography}

\end{document}